\documentclass[11pt,hidelinks]{article}

\usepackage{amsmath}
\usepackage{amssymb}
\usepackage{amsthm}
\usepackage{amsfonts}
\usepackage{bbm}
\usepackage{bm}
\usepackage{multirow}
\usepackage{natbib}
\usepackage{graphicx}
\usepackage{dsfont}
\usepackage{mathrsfs}
\usepackage{xcolor}
\usepackage{microtype} 
\usepackage[shortlabels]{enumitem}
\usepackage{hyperref}
\usepackage[capitalise,noabbrev]{cleveref}
\hypersetup{colorlinks = true, allcolors = blue}
\usepackage{authblk}
\usepackage{mathtools}
\usepackage{mleftright}
\usepackage[margin=1in]{geometry}
\usepackage{tikz}
\usepackage{pgfplots}
\pgfplotsset{compat=1.18}
\usepgfplotslibrary{groupplots}

\newlist{coroitems}{enumerate}{1}
\setlist[coroitems,1]{%
  label=\textup{\thecoro\alph*.},
  ref=\textup{\thecoro\alph*}
}

\newlist{propitems}{enumerate}{1}
\setlist[propitems,1]{%
  label=\textup{\theprop\alph*.},
  ref=\textup{\theprop\alph*}
}

\newlist{thmitems}{enumerate}{1}
\setlist[thmitems,1]{%
  label=\textup{\thethm\alph*.},
  ref=\textup{\thethm\alph*}
}

\crefname{coroitemsi}{Corollary}{Corollaries}
\Crefname{coroitemsi}{Corollary}{Corollaries}

\crefname{propitemsi}{Proposition}{Propositions}
\Crefname{propitemsi}{Proposition}{Propositions}

\crefname{thmitemsi}{Theorem}{Theorems}
\Crefname{thmitemsi}{Theorem}{Theorems}

\newcommand{\eps}{\varepsilon}

\newcommand{\bepsilon}{\bm{\varepsilon}}
\newcommand{\bzero}{\bm{0}}
\newcommand{\bone}{\bm{1}}
\newcommand{\Ab}{\mathbf{A}}
\newcommand{\Vb}{\mathbf{V}}
\newcommand{\bb}{\bm{b}}
\newcommand{\bn}{\bm{n}}

\newcommand{\br}{\bm{r}}
\newcommand{\bs}{\bm{s}}
\newcommand{\bv}{\bm{v}}
\newcommand{\bw}{\bm{w}}
\newcommand{\bx}{\bm{x}}
\newcommand{\Ind}[1]{ \mathds{1} \left\{ #1 \right\} }
\newcommand{\C}{\mathbb{C}}
\newcommand{\N}{\mathbb{N}}
\newcommand{\R}{\mathbb{R}}
\usepackage{calrsfs}
\DeclareMathAlphabet{\pazocal}{OMS}{zplm}{m}{n}

\newcommand{\cB}{\pazocal{B}}
\newcommand{\cF}{\pazocal{F}}
\newcommand{\cP}{\pazocal{P}}
\newcommand{\cS}{\pazocal{S}}
\newcommand{\cX}{\pazocal{X}}

\newcommand{\E}{\mathbb{E}}
\newcommand{\EE}{\mathbb{E}}
\newcommand{\Prob}{\mathbb{P}}

\newcommand{\stdunif}{\mathrm{Unif}(0,1)}
\newcommand{\cPab}{\cP_{[a,b]}}
\newcommand{\cAab}{\cS^{\mathrm{adm}}_{[a,b]}}
\newcommand{\cFmeps}{\cF_m(c_{1:m})}

\renewcommand{\left}{\mleft}
\renewcommand{\right}{\mright}

\newcommand{\fallfact}[1]{^{\underline{#1}}}
\newcommand{\anti}[1]{^{( #1 )}}

\newcommand{\epsseq}{\{\eps_n\}_n}
\newcommand{\wc}{\rightsquigarrow}

\theoremstyle{plain}
\newtheorem{thm}{Theorem}
\newtheorem{lemm}{Lemma}
\newtheorem{prop}{Proposition}

\newtheorem{coro}{Corollary}

\theoremstyle{definition}
\newtheorem{ex}{Example}

\theoremstyle{remark}
\newtheorem*{rem}{Remark}

\crefname{thm}{Theorem}{Theorems}
\crefname{lemm}{Lemma}{Lemmas}
\crefname{prop}{Proposition}{Propositions}
\crefname{conj}{Conjecture}{Conjectures}
\crefname{coro}{Corollary}{Corollaries}
\crefname{ex}{Example}{Examples}
\crefname{con}{Condition}{Conditions}
\crefname{assum}{Assumption}{Assumptions}
\crefname{algo}{Algorithm}{Algorithm}
\crefname{defi}{Definition}{Definitions}

\newcommand{\dif}{\mathop{}\!\mathrm{d}}

\title{Primitive Sequences for Probability Measures on Compact Intervals}

\author{Robert Zimmerman\thanks{Email: \texttt{\url{robert.zimmerman@alumni.utoronto.ca}}}}

\date{\today}

\begin{document}

\maketitle

\begin{abstract}
    We introduce a sequence representation of a random variable $X$ supported on a compact interval $[a,b]$, which we call a \textit{primitive sequence}. We construct this sequence by repeatedly antidifferentiating the associated cumulative distribution function of $X$ and evaluating the antiderivatives at the endpoint $b$. We show that the primitive sequence of $X$ can be identified as a factorially rescaled moment sequence of the reflected random variable $b-X$. Through this identification, we show that the primitive sequence transparently captures qualitative features of the distribution of $X$. We then connect primitive sequences directly to classical moment theory and exploit this connection to characterize admissible primitive sequences and to show that under natural topologies, the map from probability measures to primitive sequences is a homeomorphism. We end by examining the set of probability measures whose first $m$ primitive sequence terms are fixed, and thereby obtaining sharp upper and lower bounds on two functionals of those measures.
\end{abstract}

\section{Introduction}

A probability measure supported on a compact interval may be identified in many equivalent ways: by its cumulative distribution function, by its moments, or by its quantile function, to name only a few. In this paper, we study a sequence representation that we call a \textit{primitive sequence}, which is obtained directly from the cumulative distribution function by repeated antidifferentiation. More precisely, if a random variable $X$ is supported on a compact interval $[a,b]$ with cdf $F_X$, we consider the endpoint values
\[
F_X\anti{-1}(b), F_X\anti{-2}(b), \ldots,
\]
where the repeated antiderivatives are specified by the boundary conditions
\[
F_X\anti{-n}(a)=0,\qquad n\ge 1,
\]
and the resulting sequence $\{F_X\anti{n}(b)\}_n$ is the primitive sequence associated with the distribution of $X$.

We prove in \cref{sec:reptheorem} that primitive sequences are exactly reflected moment sequences with factorial scaling. Thus, while primitive sequences are not separate from the standard theory of moment sequences, they allow us to recast the classical Hausdorff moment problem \citep{hausdorff1921summationsmethoden} and its consequences into an alternative coordinate system which is transparently derived from the distribution function and encodes qualitative features of the distribution in a more transparent way; for example, mass near the lower support endpoint $a$, atomic components, and local smoothness of the density (when the distribution is absolutely continuous) are all reflected in the decay of the corresponding primitive sequence.

There is some precedent for the use of iterated integral transforms of distribution functions and tails to characterize distributional behaviour. In stochastic ordering, for example, \cite{rolskiOrderRelationsSet1976} uses integrated distribution functions and survival functions to define and analyze order relations. More broadly, distributions have been represented via integral functionals of the cdf or quantile function and related coordinate systems including $L$-statistics \citep{chernoffAsymptoticDistributionLinear1967}, and similar transforms also appear in approximation and nonparametric estimation via approximation of densities by growing exponential family models \citep{barronApproximationDensityFunctions1991} and via orthogonal series and wavelets \citep[][Chapters~2--3]{efromovichNonparametricCurveEstimation1999}. Related objects arise in actuarial science through the $n$th \emph{stop-loss transform} (or $n$th partial moment) $\E[(X-u)^n_+]$, used in comparing insurance risks and in ruin-related analyses \citep{chengNthStoplossTransform2003,riffiStoplossTransformProperties2003}, and through the $n$th \emph{lower partial moment} $\E[(u - X)_+^n]$, used as a measure of downside risk \citep{bawaOptimalRulesOrdering1975,fishburnMeanRiskAnalysisRisk1977}. However, these works develop properties of $u \mapsto \E[(X-u)^n_+]$ or $u \mapsto \E[(u - X)_+^n]$ as \emph{functions} of the threshold $u$. Our focus is different: we define the primitive sequence by iterating antiderivatives of the cdf subject to fixed boundary conditions and then recording the resulting endpoint value at $b$. To our knowledge, this particular moment sequence has not been previously studied.

The remainder of the paper is organized as follows. \cref{sec:reptheorem} develops the basic representation theorem and elementary properties of primitive sequences. \cref{sec:examples} presents several examples chosen to illustrate how primitive sequences capture important qualitative features of their underlying laws. \cref{sec:admissiblesequences} characterizes admissible primitive sequences via the Hausdorff criterion and studies the mapping from measures to primitive sequences from a topological point of view, showing that the mapping is affine, injective, and continuous with respect to the weak topology on probability measures and the product topology on primitive sequences, and is in fact a homeomorphism onto its image with respect to these topologies. \cref{sec:finitetruncation} proves the convexity of the set of probability measures whose first $m$ primitive sequence terms are fixed, and then studies two optimization problems which produce sharp upper and lower bounds on certain functionals of the measures. \cref{sec:discussion} concludes with a brief discussion and suggestions for future work.

\subsection{Setup and notation}

Throughout, we fix a probability space $(\Omega, \cF, \Prob)$ and a real-valued random variable $X$ supported on $[a,b]$, where $-\infty < a < b < \infty$; that is, $\Prob(X \in [a,b]) = 1$ (continuity or full support is \emph{not} assumed). The probability measure associated with $X$ is $\mu$, and we write $F_X(x) := \Prob(X \leq x) = \mu((-\infty,x])$ for the cdf of $X$, defined on $\R$; we occasionally write $F_\mu(x) := \mu((-\infty, x])$ when an explicit reference to a random variable is unnecessary. The $L^\infty$-norm (or uniform norm or essential supremum) of $X$ is $\|X\|_\infty := \inf\{M \geq 0: |X| \leq M \text{ a.s.}\}$. The left limit at $x^*$ of a function $f:\R \to \R$ is written as $f(x^*-) := \lim_{x \nearrow x^*} f(x)$. $C([a,b])$ is the set of continuous functions from $[a,b]$ to $\R$, $\cPab$ denotes the set of Borel probability measures on $[a,b]$, and $\cB_{[a,b]}$ is the set of Borel sets of $[a,b]$. We write $\mu_k \wc \mu$ to mean that the sequence $\{\mu_k\}_k$ converges weakly to $\mu$ as $k \to \infty$. Infinite sequences $\{a_1, a_2, \ldots\}$ are written compactly as $\{a_n\}_n$, and we also write $a_{1:m} := \{a_1, \ldots, a_m\}$ for $m \geq 1$. Finally, the set of extreme points of a convex set $A$ is defined as
\[
    \mathrm{ext}(A) := \{x \in A: \text{ there do not exist } x_1 \neq x_2 \in A \text{ and } \lambda \in (0,1) \text{ such that } x = \lambda x_1 + (1-\lambda) x_2 \}.
\]

\section{Representation theorem and consequences}\label{sec:reptheorem}

We begin by formally defining a primitive sequence. For $n \geq 0$, define the $n$-fold antiderivative $F_X\anti{-n}:
\R \to \R$ of $F_X$ by $F_X\anti{0} := F_X$ and recursively by
\begin{equation*}
    F_X\anti{-n}(t) := \int_a^t F_X\anti{-(n-1)}(x) \dif x, \qquad n \geq 1,
\end{equation*}
It follows from basic properties of cdfs that for each $n \geq 1$, $F_X\anti{-n}$ is continuous on $\R$, non-negative and non-decreasing on $[a,\infty)$, and satisfies the boundary condition $F_X\anti{-n}(a) = 0$ automatically. Our central object of study is the sequence of values of $F_X\anti{-n}$ evaluated at $b$ for each $n \geq 0$; thus, we define the sequence $\epsseq$ where
\begin{equation}\label{eq:epsseqdef}
    \eps_n = \eps_n(\mu) := F_X\anti{-n}(b), \qquad n \geq 0.
\end{equation}
We call $\epsseq$ the \emph{primitive sequence} associated with $\mu$. In this section, we establish basic properties of $\epsseq$, relate it explicitly to the raw moments of $X$, and identify its important monotonicity and convexity features.

Our first result is a fundamental identity that we will use repeatedly throughout the paper. While \cref{eq:epsseqdef} defines $\epsseq$ in terms of iterated integration of $F_X$, we show here that $\eps_n$ is in fact simply a scaled moment of the distance from $X$ to the upper support endpoint $b$:
\begin{thm}\label{thm:representation} We have
\begin{equation}\label{eq:basicrep}
    \eps_n = \frac{1}{n!} \E[(b-X)^n], \qquad n \geq 0.
\end{equation}
\end{thm}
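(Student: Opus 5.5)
The plan is to first obtain a closed form for the iterated antiderivatives through Cauchy's formula for repeated integration, and then to use Fubini to move the integral inside the expectation. The case $n=0$ is immediate: $\eps_0 = F_X(b) = 1 = \E[(b-X)^0]$, because $X \le b$ almost surely. For $n \geq 1$, the first step is to prove by induction on $n$ that
\begin{equation*}
    F_X\anti{-n}(t) = \frac{1}{(n-1)!}\int_a^t (t-x)^{n-1} F_X(x) \dif x, \qquad t \geq a.
\end{equation*}
The base case $n=1$ is the definition itself. For the inductive step, substitute the formula for $F_X\anti{-(n-1)}$ into $\int_a^t F_X\anti{-(n-1)}(s)\dif s$ and exchange the order of integration over the triangle $a \le x \le s \le t$. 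This is justified by Tonelli, since the integrand is nonnegative and bounded on a compact set. The inner integral is $\int_x^t (s-x)^{n-2}\dif s/(n-2)! = (t-x)^{n-1}/(n-1)!$, which completes the induction.

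Next, evaluate this formula at $t=b$ and write $F_X(x) = \E[\Ind{X \le x}]$. Since everything is nonnegative and bounded, Tonelli lets us swap the expectation with the $\dif x$-integral:
\begin{equation*}
    \eps_n = \frac{1}{(n-1)!}\E\left[\int_a^b (b-x)^{n-1}\Ind{X\le x}\dif x\right] = \frac{1}{(n-1)!}\E\left[\int_{X}^b (b-x)^{n-1}\dif x\right].
\end{equation*}
The second equality uses $a \le X \le b$ almost surely, so that the indicator cuts the range of integration down to $[X,b]$. The inner integral equals $(b-X)^n/n$, and this gives $\eps_n = \E[(b-X)^n]/n!$.

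The argument has no real obstacle. The only care needed is in justifying the interchanges of integration, which Tonelli handles because all integrands are nonnegative, and in using $\Prob(X\in[a,b])=1$ so that the indicator truncation at $X$ lies inside $[a,b]$. The computation does not depend on whether $F_X$ is right-continuous at atoms, since $\Ind{X\le x}$ differs from $\Ind{X<x}$ only on a Lebesgue-null set of $x$.
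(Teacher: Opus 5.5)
Your proposal is correct and follows the paper's proof essentially step for step. Both arguments use Cauchy's repeated-integration formula at $t=b$, write $F_X(x)=\E[\Ind{X\le x}]$, and interchange via Tonelli to reduce to $\int_X^b (b-x)^{n-1}\dif x=(b-X)^n/n$. The only difference is that you prove Cauchy's formula by induction rather than citing it, which is a harmless, self-contained addition.
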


\begin{proof}
    The result is trivial when $n = 0$, so we may assume that $n \geq 1$. According to Cauchy's formula for repeated integration \citep[][Leçon 35]{cauchyResumeLeconsDonnees2009}, for $t \in [a,b]$ we have
    \begin{equation*}
        F_X\anti{-n}(t) = \frac{1}{(n-1)!} \int_a^t (t-x)^{n-1} F_X(x) \dif x,
    \end{equation*}
    and with the choice $t = b$ we obtain
    \begin{equation}\label{eq:epsCauchy}
        \eps_n = \frac{1}{(n-1)!} \int_a^b (b-x)^{n-1} F_X(x) \dif x.
    \end{equation}
    Now, 
    \begin{equation*}
        (b-X)^n = n \int_X^b (b-x)^{n-1} \dif x
        = n \int_a^b (b-x)^{n-1} \Ind{X \leq x} \dif x.
    \end{equation*}
    Taking expectations gives
    \begin{equation}\label{eq:EbmXn}
        \E[(b-X)^n] = n \E\left[\int_a^b (b-x)^{n-1} \Ind{X \leq x} \dif x \right]
        = n \int_a^b (b-x)^{n-1} F_X(x) \dif x,
    \end{equation}
    where we have used Tonelli's theorem to interchange the expectation with the integral since the integrand is non-negative. Combining \cref{eq:epsCauchy,eq:EbmXn} gives 
    \begin{equation*}
        \eps_n = \frac{1}{(n-1)!} \frac{\E[(b-X)^n]}{n}
        = \frac{1}{n!} \E[(b-X)^n],
    \end{equation*}
    as desired.
\end{proof}

\begin{rem}
    The so-called \emph{Darth Vader rule} \citep{muldowney2012darth} for non-negative random variables is that if $Y \geq 0$ a.s., then 
    \begin{equation}\label{eq:DARTHVADER}
        \E[Y] = \int_0^\infty \left(1 - F_Y(x)\right) \dif x.
    \end{equation}
    The rule can be generalized to random variables $X \in [a,b]$ a.s. (possibly with $\Prob(X < 0) > 0$) by writing $X = b - (b-X)$. Since $b-X \geq 0$ a.s., \cref{eq:DARTHVADER} yields
    \begin{equation}\label{eq:DARTHVADER2}
        \E[X] = b - \int_0^\infty \left(1 - F_{b-X}(x)\right) \dif x
        = b - \int_a^b F_X(x) \dif x,
    \end{equation}
    where the second equality holds because $F_{b-X}(x) = 1$ for $x \geq b-a$. \cref{eq:DARTHVADER2} is a special case of \cref{thm:representation}: simply take $n = 1$ in \cref{eq:basicrep} to get the equivalent statement that $\E[b-X] = \eps_1 = F_X\anti{-1}(b)$. Thus, \cref{thm:representation} can be viewed as a higher-order Darth Vader rule, expressing all moments of $b-X$ in terms of antiderivatives. (For other generalizations of the Darth Vader rule, see \cite{muldowney2012darth}.)
\end{rem}

From \cref{thm:representation}, it is clear that the primitive sequence contains complete information about the raw moments of $X$, in the sense that $\E[X], \E[X^2], \ldots, \E[X^n]$, can be extracted from $\eps_1, \eps_2, \ldots, \eps_n$ for any $n \geq 0$ through a finite sequence of linear operations that do not depend on $\mu$. This follows simply from linearity of expectation and the fact that the binomial expansion of the degree-$n$ polynomial $(b-x)^n$ includes all powers of $x$ up to and including the $n$th. Still, it will be convenient to record explicit expressions for the $n$th moment of $X$ in terms of the first $n$ terms of the primitive sequence, and vice versa:

\begin{coro}\label{cor:momentstoeps}
    We have
    \begin{equation}\label{eq:momentstoeps}
        \E[X^n] = \sum_{j=0}^n (-1)^j n\fallfact{j} b^{n-j} \eps_j \quad \text{and} \quad
        \eps_n = \sum_{j=0}^n \frac{(-1)^jb^{n-j}}{j! (n-j)!} \E[X^j],
    \end{equation}
    where $n\fallfact{j} := n (n-1) (n-2) \cdots (n-j+1)$ is the $j$th falling factorial of $n$.
\end{coro}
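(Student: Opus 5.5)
Both identities in \eqref{eq:momentstoeps} follow from \cref{thm:representation} by a binomial expansion and linearity of expectation. No analytic subtlety arises, since $X$ is bounded and all moments are finite.

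For the second identity, expand $(b-X)^n$ by the binomial theorem:
\[
(b-X)^n = \sum_{j=0}^n \binom{n}{j} b^{n-j} (-1)^j X^j.
\]
Take expectations and divide by $n!$. By \eqref{eq:basicrep} this gives
\[
\eps_n = \frac{1}{n!}\sum_{j=0}^n \frac{n!}{j!(n-j)!}(-1)^j b^{n-j}\E[X^j],
\]
and cancelling the $n!$ yields the stated formula.

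For the first identity, write $X = b - (b-X)$ and expand:
\[
X^n = \sum_{j=0}^n \binom{n}{j} b^{n-j}(-1)^j (b-X)^j.
\]
Taking expectations gives $\E[X^n] = \sum_{j=0}^n \binom{n}{j} b^{n-j}(-1)^j \E[(b-X)^j]$. Now substitute $\E[(b-X)^j] = j!\,\eps_j$, again from \eqref{eq:basicrep}. Then use
\[
\binom{n}{j}\, j! = \frac{n!}{(n-j)!} = n\fallfact{j},
\]
which produces $\E[X^n] = \sum_{j=0}^n (-1)^j n\fallfact{j} b^{n-j}\eps_j$.

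The only step needing care is the bookkeeping in the first identity: expanding in powers of $b-X$ rather than $X$, and converting $\binom{n}{j}j!$ into the falling factorial. The $j=0$ term is consistent with the conventions $n\fallfact{0}=1$ and $\eps_0 = F_X(b) = 1$.
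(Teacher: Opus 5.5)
Your proposal is correct and follows the paper's proof essentially line for line. The paper also writes $X = b-(b-X)$, expands binomially, substitutes $\E[(b-X)^j] = j!\,\eps_j$ from \cref{thm:representation}, identifies $\binom{n}{j}j!$ with $n\fallfact{j}$, and obtains the second identity by expanding $(b-X)^n$ directly; your check of the $j=0$ term ($n\fallfact{0}=1$, $\eps_0=F_X(b)=1$) is a small detail the paper leaves implicit.
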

\begin{proof}
    Using the binomial theorem and \cref{thm:representation}, we have
    \begin{align*}
        \E[X^n] &= \E[((-1)(b-X) + b)^n] \\
        &= \sum_{j=0}^n (-1)^j\binom{n}{j} b^{n-j} \E[(b-X)^j] \\
        &= \sum_{j=0}^n (-1)^j\frac{n!}{(n-j)!} b^{n-j} \eps_j \\
        &= \sum_{j=0}^n (-1)^j n\fallfact{j} b^{n-j} \eps_j,
    \end{align*}
    which is the first expression in \cref{eq:momentstoeps}. For the second expression, the binomial theorem similarly yields
    \begin{equation*}
        \eps_n = \frac{1}{n!}\E[(b-X)^n] 
        = \frac{1}{n!} \sum_{j=0}^n \binom{n}{j} (-1)^j b^{n-j} \E[X^j]
        = \sum_{j=0}^n \frac{(-1)^j b^{n-j}}{j! (n-j)!} \E[X^j],
    \end{equation*}
    as desired.
\end{proof}

\begin{rem}
    An alternative proof of \cref{cor:momentstoeps} sets up the infinite linear system $\bm{m} - \bb^\bullet = \Ab \bepsilon$ where $\bm{m} := (\EE[X], \EE[X^2], \E[X^3], \cdots)^\top$, $\bepsilon := (\eps_1, \eps_2, \eps_3, \ldots)^\top$, $\bb^\bullet := (b, b^2, b^3, \ldots)^\top$ and $\Ab$ is the infinite lower triangular matrix
    \[
        \Ab := \begin{pmatrix}
            -1 & 0 & 0 & 0 & \cdots \\
            -2b & 2 \cdot 1 & 0 & 0 & \cdots \\
            -3b^2 & 3 \cdot 2b & -3 \cdot 2 \cdot 1 & 0 & \cdots \\
            -4b^3 & 4 \cdot 3b^2 & -4\cdot3\cdot2b & 4 \cdot 3 \cdot 2 \cdot 1 & \cdots \\
            \vdots & \vdots & \vdots & \vdots & \ddots\\
        \end{pmatrix}.
    \]
    For each $m \geq 1$, the $m \times m$ upper-left submatrix of $\Ab$ is lower triangular with diagonal entries $-1!, 2!, \ldots, (-1)^m m!$ and hence invertible, and the first $m$ terms of the primitive sequence $\eps_1, \ldots, \eps_m$ can therefore be obtained uniquely from $\EE[X], \ldots, \EE[X^m]$ via back-substitution. While this route to proving \cref{cor:momentstoeps} is clearly more involved than our straightforward application of the binomial theorem, it does show explicitly that passing between the raw moment sequence $\{\E[X^n]\}_n$ and the primitive sequence $\epsseq$ is equivalent to a change of basis in $\R^m$ for each $m \geq 1$.
\end{rem}

Next, we collect several basic properties of the primitive sequence $\epsseq$ which we will use throughout the paper. Each of these properties is effectively a consequence of the compact support of $X$:
\begin{coro}\label{coro:basicproperties}
    The following hold:
    \begin{coroitems}
        \item\label{cor:sequencebounds} $0 \leq \eps_n \leq (b-a)^n/n!$. Consequently, $\eps_n \to 0$ and $\epsseq \in \ell^1(\N)$. 
        \item\label{cor:seqboundsequality} $\eps_n = (b-a)^n/n!$ for all $n \geq 0$ if and only if $X = a$ a.s., and $\eps_n = 0$ for all $n \geq 1$ if and only if $X = b$ a.s.
        \item\label{cor:log-convex} The sequence $\{n!\eps_n\}_n$ is log-convex, and hence
        \begin{equation}\label{eq:almostlogcon}
            \eps_n^2 \leq \left(1 + \frac{1}{n}\right) \eps_{n-1} \eps_{n+1}.
        \end{equation}
    \end{coroitems}
\end{coro}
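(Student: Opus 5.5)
All three parts will follow from \cref{thm:representation}. Throughout, set $Y := b - X$, so that $0 \leq Y \leq b-a$ a.s. and $\eps_n = \E[Y^n]/n!$.

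\emph{Part (a).} Since $0 \leq Y^n \leq (b-a)^n$ a.s., taking expectations and dividing by $n!$ gives $0 \leq \eps_n \leq (b-a)^n/n!$. The right-hand side is the $n$th term of the exponential series for $e^{b-a}$. Hence $\eps_n \to 0$, and $\sum_n \eps_n \leq e^{b-a} < \infty$, which shows $\epsseq \in \ell^1(\N)$.

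\emph{Part (b).} For the first claim, suppose $\eps_1 = b-a$. Then $\E[(b-a) - Y] = 0$ with $(b-a) - Y \geq 0$ a.s., which forces $Y = b-a$ a.s., i.e.\ $X = a$ a.s. Conversely, if $X = a$ a.s., then $\eps_n = (b-a)^n/n!$ for every $n$ by direct substitution. The case $n=0$ is automatic, so equality for all $n \geq 0$ is equivalent to equality at $n = 1$. For the second claim, suppose $\eps_1 = 0$. Then $\E[Y] = 0$ with $Y \geq 0$, so $Y = 0$ a.s., i.e.\ $X = b$ a.s. The converse is immediate.

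\emph{Part (c).} Set $m_n := n!\eps_n = \E[Y^n]$. Log-convexity means $m_n^2 \leq m_{n-1} m_{n+1}$ for $n \geq 1$. This follows from Cauchy--Schwarz: write $Y^n = Y^{(n-1)/2} \cdot Y^{(n+1)/2}$, so that
\[
\E[Y^n]^2 \leq \E[Y^{n-1}]\,\E[Y^{n+1}].
\]
Substituting $m_j = j!\eps_j$ gives $(n!)^2 \eps_n^2 \leq (n-1)!\,(n+1)!\,\eps_{n-1}\eps_{n+1}$. Dividing by $(n!)^2$ yields the bound~\eqref{eq:almostlogcon}, because $(n+1)!(n-1)!/(n!)^2 = (n+1)/n = 1 + 1/n$.

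The only mildly delicate point is the degenerate case in which some $\eps_j$ vanishes. No special handling is needed: Cauchy--Schwarz holds without any positivity assumption, so the inequality stays valid. Moreover, $\E[Y^{n-1}] = 0$ for some $n \geq 2$ forces $Y = 0$ a.s., and then every subsequent term is also zero.
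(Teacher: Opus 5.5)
Your proposal is correct and follows essentially the same route as the paper: a pointwise bound on $(b-X)^n$ for (a), the ``nonnegative random variable with zero expectation'' argument for (b), and Cauchy--Schwarz applied to $Y^{(n-1)/2}Y^{(n+1)/2}$ for (c). Your remark that equality at $n=1$ alone already forces $X=a$ a.s.\ is a minor sharpening of the paper's argument, which runs the same reasoning with a general $n$.
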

\begin{proof}
    Because $X \in [a,b]$ a.s., we have $b-X \in [0,b-a]$ a.s. and so $(b-X)^n \in [0, (b-a)^n]$ a.s. Therefore $0 \leq \E[(b-X)^n] \leq (b-a)^n$; upon dividing through by $n!$ and applying \cref{thm:representation}, we obtain $0 \leq \eps_n \leq (b-a)^n/n!$, which immediately implies that $\eps_n \to 0$ as $n \to \infty$. Moreover,
    \begin{equation*}
        \sum_{n=0}^\infty \eps_n
        \leq \sum_{n=0}^\infty \frac{(b-a)^n}{n!}
        = e^{b-a}
        < \infty,
    \end{equation*}
    which proves \cref{cor:sequencebounds}.

    If $X = a$ a.s., then \cref{thm:representation} immediately yields $\eps_n = \E[(b-a)^n]/n! = (b-a)^n/n!$ for all $n \geq 0$. Conversely, if $\eps_n = (b-a)^n/n!$ for all $n \geq 0$, then $\E[(b-X)^n] = (b-a)^n$ and so $\E[(b-a)^n - (b-X)^n] = 0$. But $(b-a)^n - (b-X)^n \geq 0$ a.s., and therefore $(b-a)^n - (b-X)^n = 0$ a.s., whence $X = a$ a.s. On the other hand, if $X = b$ a.s., then \cref{thm:representation} yields $\eps_n = 0$ for all $n \geq 1$. Conversely, if $\eps_n = 0$ for all $n \geq 1$, then $\E[b-X] = n! \eps_1 = 0$, and because $b - X \geq 0$ a.s., we must have $X = b$ a.s., which proves \cref{cor:seqboundsequality}.

    Finally, the Cauchy-Schwarz inequality yields
    \begin{equation}\label{eq:CSineq}
        \E\left[(b-X)^n \right]^2 = \left(\E\left[(b-X)^\frac{n-1}{2} (b-X)^\frac{n+1}{2} \right] \right)^2
        \leq \E\left[(b-X)^{n-1}\right] \E\left[(b-X)^{n+1}\right].
    \end{equation}
    Substituting in $\E[(b-X)^{n-1}] = (n-1)! \eps_{n-1}$ by \cref{thm:representation} and $\E[(b-X)^{n+1}] = (n+1)!\eps_{n+1}$ similarly, \cref{eq:CSineq} can be written as 
    \[
        (n!\eps_n)^2 \leq ((n-1)!\eps_{n-1}) ((n+1)!\eps_{n+1}),
    \]
    which shows that $\{n! \eps_n\}_n$ is log-convex. Dividing through by $(n!)^2$ and rearranging yields \cref{eq:almostlogcon}, proving \cref{cor:log-convex}.
\end{proof}

The next corollary demonstrates several direct connections between a primitive sequence and its associated random variable $X$. It is convenient to normalize the primitive sequence in order to remove its dependence on the length of the interval $[a,b]$. To this end, we introduce a standardized version of the primitive sequence $\epsseq$, allowing for a straightforward analysis of its limiting behavior in terms of the mass $X$ places at the lower endpoint $a$:

\begin{coro}\label{coro:normalizedsequence}
    Define the normalized sequence $\{\gamma_n\}_n$ by
    \begin{equation*}
        \gamma_n := \frac{n! \eps_n}{(b-a)^n}
        =
        \E\left[\left(\frac{b-X}{b-a} \right)^n \right] \in [0,1], \qquad n \geq 0.
    \end{equation*}
    Then the following hold:
    \begin{coroitems}
        \item\label{cor:lefttailID} The sequence $\{\gamma_n\}_n$ is non-increasing and bounded in $[0,1]$, and satisfies $\gamma_n \searrow \Prob(X = a)$ as $n \to \infty$. 
        \item\label{cor:seqsharperbound} For all $\delta \in (a,b)$, 
        \[
            \gamma_n
            \leq \Prob(X \leq \delta) + \left(\frac{b- \delta}{b-a}\right)^n, \qquad n \geq 0.
        \]
        \item\label{cor:gapexpdecay} If there exists $\delta > 0$ such that $\Prob(X \geq a + \delta) = 1$, then 
        \begin{equation}\label{eq:gapexpdecay}
            \eps_n \leq \frac{(b-a-\delta)^n}{n!}, \qquad n \geq 0.
        \end{equation}
        \item\label{cor:tiltedmean} If $\Prob(X = a) > 0$, then
        \begin{equation}\label{eq:ratiolimit}
            \lim_{n \to \infty} \frac{(n+1)\eps_{n+1}}{\eps_n} = b-a.
        \end{equation}
    \end{coroitems}
\end{coro}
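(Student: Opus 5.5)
Throughout, set $Y := (b-X)/(b-a)$, so that $Y \in [0,1]$ a.s. By \cref{thm:representation}, $\gamma_n = \E[Y^n]$, and each part becomes a statement about moments of a $[0,1]$-valued random variable.

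For \cref{cor:lefttailID}, I use that $Y^{n+1} \leq Y^n$ pointwise on $[0,1]$, so $\{\gamma_n\}_n$ is non-increasing and lies in $[0,1]$. As $n \to \infty$, $Y^n \to \Ind{Y = 1} = \Ind{X = a}$ pointwise, and $0 \leq Y^n \leq 1$. Dominated (or monotone) convergence then gives $\gamma_n \searrow \Prob(X=a)$.

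For \cref{cor:seqsharperbound}, I split the expectation on the event $\{X \leq \delta\}$. On this event I bound $Y^n \leq 1$. On $\{X > \delta\}$ we have $Y < (b-\delta)/(b-a)$, so $Y^n \leq ((b-\delta)/(b-a))^n$. Adding the two contributions gives
\[
\gamma_n \leq \Prob(X \leq \delta) + \left(\frac{b-\delta}{b-a}\right)^n .
\]
For \cref{cor:gapexpdecay}, the hypothesis gives $0 \leq b-X \leq b-a-\delta$ a.s. Hence $\E[(b-X)^n] \leq (b-a-\delta)^n$, and dividing by $n!$ and applying \cref{thm:representation} yields \cref{eq:gapexpdecay}, exactly as in \cref{cor:sequencebounds}.

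The main obstacle is \cref{cor:tiltedmean}, since a ratio limit needs control of both numerator and denominator. I rewrite the ratio as
\[
\frac{(n+1)\eps_{n+1}}{\eps_n} = (b-a)\,\frac{\gamma_{n+1}}{\gamma_n}.
\]
This is well defined because $\gamma_n \geq \Prob(X=a) > 0$. By \cref{cor:lefttailID}, both $\gamma_{n+1}$ and $\gamma_n$ converge to the same positive limit $\Prob(X=a)$, so the ratio tends to $1$ and the claim follows. Positivity of the limit is exactly where the hypothesis $\Prob(X=a)>0$ enters, so the difficulty dissolves once part (a) is in hand.
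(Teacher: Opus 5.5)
Your proposal is correct. Parts (a)--(c) coincide with the paper's proof: pointwise monotonicity of $Y^n$ with bounded convergence, the split on $\{X \leq \delta\}$, and the almost-sure bound $0 \leq b-X \leq b-a-\delta$.

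For part (d) you take a genuinely different and much shorter route. The identity $(n+1)\eps_{n+1}/\eps_n = (b-a)\gamma_{n+1}/\gamma_n$ reduces everything to part (a), since numerator and denominator share the positive limit $\Prob(X=a)$. The paper instead introduces the tilted laws $\nu_n(B) = \E[(b-X)^n \Ind{X \in B}]/\E[(b-X)^n]$ and writes $(b-a) - (n+1)\eps_{n+1}/\eps_n = \int (x-a)\,\nu_n(\dif x)$. It then shows $\nu_n([a+\eta,b]) \to 0$ for each $\eta > 0$, by comparing $(b-a-\eta)^n$ with $(b-a-\eta/2)^n\,\Prob(X \in [a,a+\eta/2])$.

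What the longer argument buys is generality. It only uses $\Prob(X \in [a,a+\eta/2]) > 0$ for every $\eta$, i.e.\ that $a$ lies in the support of $\mu$. So it actually proves the ratio limit with no atom at $a$; for $\mathrm{Beta}(\alpha,\beta)$, for instance, the ratio is $(\beta+n)/(\alpha+\beta+n) \to 1$. In that situation your argument says nothing, since $\gamma_n$ and $\gamma_{n+1}$ both tend to $0$.

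You could recover the general case in your framework. Log-convexity of $\{n!\eps_n\}_n$ makes $\gamma_{n+1}/\gamma_n$ non-decreasing and bounded by $1$, hence convergent. Its limit must equal $\lim_n \gamma_n^{1/n} = \|Y\|_\infty$, which is $1$ when $a$ is in the support. Under the stated hypothesis, however, yours is the cleaner proof and isolates exactly where $\Prob(X=a) > 0$ is needed.
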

\begin{proof}
    To begin with, observe that $(b - X)/(b-a) \in [0,1]$ a.s., and therefore 
    \begin{equation*}
        0 \leq \left(\frac{b - X}{b-a}\right)^{n+1} 
        \leq \left(\frac{b - X}{b-a}\right)^n 
        \leq 1 \text{ a.s.}
    \end{equation*}
    By monotonicity of expectation, it follows that
    \begin{equation*}
        0 \leq \E\left[\left(\frac{b - X}{b-a}\right)^{n+1} \right] 
        \leq \E\left[\left(\frac{b - X}{b-a}\right)^n \right] 
        \leq 1, 
    \end{equation*}
    which, using \cref{thm:representation}, we can rewrite as
    \[
        0 \leq \frac{(n+1)! \eps_{n+1}}{(b-a)^{n+1}} \leq \frac{n! \eps_n}{(b-a)^n} \leq 1.
    \]
    That is, $0 \leq \gamma_{n+1} \leq \gamma_n \leq 1$, and hence $\{\gamma_n\}_n$ is non-increasing and bounded in $[0,1]$. To determine the limit of this sequence, we write 
    \begin{align*}
        \gamma_n &= \E\left[\left(\frac{b-X}{b-a}\right)^n \right] \\
        &= \E\left[\left(\frac{b-X}{b-a}\right)^n \Ind{X = a}\right] + \E\left[\left(\frac{b-X}{b-a}\right)^n \Ind{X > a}\right] \\
        &= \Prob(X = a) + \E\left[\left(\frac{b-X}{b-a}\right)^n \Ind{X > a}\right].
    \end{align*}
    On the event $X > a$, we have $0 \leq (b-X)/(b-a) \in [0,1)$, and hence
    \begin{equation*}
        \left(\frac{b - X}{b-a}\right)^n \Ind{X > a} \longrightarrow 0 \text{ a.s.}
    \end{equation*}
    as $n \to \infty$. Moreover, since $((b-X)/(b-a))^n \Ind{X>a} < 1$ a.s., the bounded convergence theorem applies and we obtain
    \[
        \E\left[\left(\frac{b-X}{b-a}\right)^n \Ind{X > a}\right] \longrightarrow 0
    \]
    as $n \to \infty$ and hence $\gamma_n \to \Prob(X = a)$ as $n \to \infty$, which proves \cref{cor:lefttailID}.

    Similarly, for any $n \geq 0$ we may write  
    \begin{equation}\label{eq:EbmXsplit}
        \E\left[(b-X)^n\right] = \E\left[(b-X)^n \Ind{X \leq \delta}\right] + \E\left[(b-X)^n \Ind{X > \delta}\right].
    \end{equation}
    On the event $X \leq \delta$ we have $b-X \leq b-a$, so $(b-X)^n \leq (b-a)^n$, whereas on $X > \delta$ we have $b-X \leq b-\delta$, so $(b-X)^n \leq (b-\delta)^n$. Therefore, from \cref{eq:EbmXsplit} and monotonicity of expectation, we obtain
    \begin{equation*}
        \E\left[(b-X)^n\right] \leq
        (b-a)^n \Prob(X \leq \delta) + (b - \delta)^n,
    \end{equation*}
    which after rearranging is equivalent to \cref{cor:seqsharperbound}.

    Next, if $\Prob(X \geq a + \delta) = 1$, then $b-X \leq b - a - \delta$ a.s., and hence $(b-X)^n \leq (b - a - \delta)^n$ a.s. for all $n \geq 0$. Taking expectations yields $\E[(b-X)^n] \leq (b-a-\delta)^n$, and applying \cref{thm:representation} and rearranging yields \cref{eq:gapexpdecay}, which proves \cref{cor:gapexpdecay}.

    Finally, to prove \cref{cor:tiltedmean}, define a probability measure $\nu_n$ on $[a,b]$ by
    \begin{equation*}
        \nu_n(B) := \frac{\E[(b-X)^n \Ind{X \in B}]}{\E[(b-X)^n]}, \qquad B \in \cB_{[a,b]},
    \end{equation*}
    and observe that
    \begin{equation*}
        (b-a) - \frac{(n+1)\eps_{n+1}}{\eps_n} 
        = \int_{[a,b]} \big( (b-a) - (b-x) \big) \nu_n(\dif x)
        = \int_{[a,b]} (x-a) \nu_n(\dif x).
    \end{equation*}
    Now, fix $\eta \in (0, (b-a))$. Since $x-a \leq \eta$ for $x \in [a, a+\eta]$ and $x-a \leq b-a$ for $x \in [a + \eta,b]$, we have
    \begin{equation*}
        \int_{[a,b]} (x-a) \nu_n(\dif x)
        \leq \eta \cdot \nu_n([a, a+\eta]) + (b-a) \cdot \nu_n([a+\eta,b])
        \leq \eta + (b-a) \cdot \nu_n([a+\eta,b]),
    \end{equation*}
    and thus
    \begin{equation}
        (b-a) - \frac{(n+1)\eps_{n+1}}{\eps_n}
        \leq 
        \eta + (b-a) \cdot \nu_n([a+\eta,b]).
    \end{equation}
    We claim that $\nu_n([a+\eta, b]) \to 0$ as $n \to \infty$. If this claim holds, then
    \begin{equation*}
        \limsup_{n \to \infty} \left( (b-a) - \frac{(n+1) \eps_{n+1}}{\eps_n}\right) \leq \eta,
    \end{equation*}
    and letting $\eta \to 0$, we therefore have 
    \begin{equation*}
        (b-a) - \frac{(n+1) \eps_{n+1}}{\eps_n} \longrightarrow 0,
    \end{equation*}
    which is equivalent to \cref{eq:ratiolimit}. To prove the claim, note that on the event $X \in [a+\eta, b]$ we have $b-X \leq b-a-\eta$, and so
    \begin{equation*}
        \E[(b-X)^n \Ind{X \in [a+\eta,b]}] 
        \leq (b-a-\eta)^n \, \Prob(X \in [a+\eta, b]) 
        \leq 
        (b-a-\eta)^n.
    \end{equation*}
    On the other hand, on the event $X \in [a, a+ \eta/2]$ we have $b-X \geq b - a - \eta/2$, so
    \begin{equation*}
        \E[(b-X)^n]
        \geq \E\left[(b-X)^n\Ind{X \in \left[a, a+ \frac{\eta}{2}\right]}\right]
        \geq \left(b-a - \frac{\eta}{2}\right)^n c_\eta,
    \end{equation*}
    where $c_\eta := \Prob(X \in [a, a + \eta/2]) \geq \Prob(X = a) > 0$. Therefore 
    \begin{equation*}
        \nu_n([a+\eta, b]) = \frac{\E[(b-X)^n \Ind{X \in [a+\eta, b]}]}{\E[(b-X)^n]}
        \leq \frac{(b-a-\eta)^n}{(b-a-\eta/2)^n c_\eta}
        =
        \frac{1}{c_\eta}\left( \frac{b-a-\eta}{b-a-\eta/2}\right)^n,
    \end{equation*}
    and since the last quantity in parentheses above is strictly less than $1$, we must have $\nu_n([a+\eta, b]) \to 0$ as $n \to \infty$, which concludes the proof of \cref{cor:tiltedmean}.
\end{proof}

The most important consequence of \cref{coro:normalizedsequence} is the convergence of the normalized sequence $\{\gamma_n\}_n$ to $\Prob(X = a)$. We now build on this result by demonstrating a polynomial rate of convergence under a mild regularity condition on the lower tail of $\mu$.

\begin{prop}\label{prop:gamma_rate}
    Assume there exist constants $\kappa>0$ and $C>0$ such that, for all $u\in[0,1]$,
    \begin{equation}\label{eq:lefttail-holder}
        \mu\bigl((a, a + (b-a)u] \bigr) \leq C u^{\kappa}.
    \end{equation}
    Then
    \begin{equation}\label{eq:holderconsequence}
        0 \leq \gamma_n- \Prob(X=a) 
        \leq C \Gamma(\kappa+1) \frac{n}{(n-1)^{\kappa+1}}
        \le C \Gamma(\kappa+1)n^{-\kappa}, \qquad n \geq 2.
    \end{equation}
\end{prop}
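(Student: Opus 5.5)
We would normalize and reduce everything to a one-dimensional integral against the lower-tail function of $\mu$. Set $U := (X-a)/(b-a) \in [0,1]$, so that by \cref{coro:normalizedsequence} we have $\gamma_n = \E[(1-U)^n]$. Subtracting the atom at $a$ gives
\[
\gamma_n - \Prob(X=a) = \E\bigl[(1-U)^n \Ind{U>0}\bigr] \geq 0,
\]
which is the lower bound in \cref{eq:holderconsequence}.

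For the upper bound we mimic the proof of \cref{thm:representation}. We write $(1-U)^n = \int_U^1 n(1-u)^{n-1}\dif u = \int_0^1 n(1-u)^{n-1}\Ind{U\le u}\dif u$. We multiply by $\Ind{U>0}$ and apply Tonelli's theorem, which is justified because the integrand is non-negative. This gives
\[
\gamma_n - \Prob(X=a) = \int_0^1 n(1-u)^{n-1}\,G(u)\dif u, \qquad G(u) := \Prob(0<U\le u) = \mu\bigl((a,a+(b-a)u]\bigr).
\]
Hypothesis \cref{eq:lefttail-holder} says $G(u)\le Cu^\kappa$. Hence
\[
\gamma_n - \Prob(X=a) \le Cn\int_0^1 u^\kappa(1-u)^{n-1}\dif u = Cn\,B(\kappa+1,n) = C\,\Gamma(\kappa+1)\,\frac{\Gamma(n+1)}{\Gamma(n+\kappa+1)}.
\]

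From this Beta expression we extract the two stated forms.

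\emph{Middle expression.} Use $(1-u)^{n-1}\le e^{-(n-1)u}$ for $n\ge2$ and extend the integral to $[0,\infty)$. Then
\[
n\int_0^\infty u^\kappa e^{-(n-1)u}\dif u = \Gamma(\kappa+1)\,\frac{n}{(n-1)^{\kappa+1}},
\]
which is the middle expression.

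\emph{Rate $n^{-\kappa}$.} Here we must be careful. One has $n/(n-1)^{\kappa+1} = n^{-\kappa}\bigl(n/(n-1)\bigr)^{\kappa+1} > n^{-\kappa}$, so the final inequality in \cref{eq:holderconsequence} cannot be derived from the middle expression. We would instead bound the Beta expression directly. By log-convexity of $\Gamma$,
\[
\log\Gamma(n+1+\kappa) - \log\Gamma(n+1) \ge \kappa\,\psi(n+1) = \kappa(H_n - \gamma_{\mathrm{EM}}) \ge \kappa\log n,
\]
where $\gamma_{\mathrm{EM}}$ denotes the Euler--Mascheroni constant. The last step uses that $H_n-\log n$ decreases to $\gamma_{\mathrm{EM}}$. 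Therefore $\Gamma(n+1)/\Gamma(n+\kappa+1)\le n^{-\kappa}$, and so $\gamma_n-\Prob(X=a)\le C\Gamma(\kappa+1)n^{-\kappa}$.

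In the write-up we would therefore present both bounds as consequences of the exact Beta-function bound, rather than as a chain. The Tonelli/layer-cake step is routine. The main obstacle is this final Gamma-ratio comparison, and in particular noticing that the chain as literally stated needs to be reorganized.
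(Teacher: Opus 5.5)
Your proof is correct, and you are right that the final inequality in the statement is false as written.

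\textbf{Shared part.} For the first two inequalities your argument is essentially the paper's. The paper reaches the same identity
\[
\gamma_n-\Prob(X=a)=n\int_0^1(1-u)^{n-1}\bigl(F_U(u)-p_0\bigr)\dif u
\]
by Riemann--Stieltjes integration by parts rather than by your Tonelli/layer-cake computation. It then uses $(1-u)^{n-1}\le e^{-(n-1)u}$ exactly as you do. Your route has a small advantage: it avoids the care needed with the atom of $U$ at $0$ in the Stieltjes boundary term, where the paper's intermediate line is slightly loose.

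\textbf{Where you diverge.} For the last step the paper simply asserts that $n/(n-1)^{\kappa+1}\le n^{-\kappa}$ for $n\ge2$. This fails for every $n\ge 2$ and $\kappa>0$; for example, $n=2$, $\kappa=1$ gives $2\le 1/2$. So the displayed chain is false. The paper's proof yields the $n^{-\kappa}$ rate only up to the extra factor $(n/(n-1))^{\kappa+1}$, not with the constant $C\,\Gamma(\kappa+1)$. Your detour supplies the missing argument. You use the exact value $C\,\Gamma(\kappa+1)\Gamma(n+1)/\Gamma(n+\kappa+1)$, the tangent-line inequality for the convex function $\log\Gamma$, and $\psi(n+1)=H_n-\gamma_{\mathrm{EM}}\ge\log n$. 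All three steps are valid.

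\textbf{A simplification.} The three quantities are in fact ordered as
\[
\frac{\Gamma(n+1)}{\Gamma(n+\kappa+1)}\;\le\; n^{-\kappa}\;<\;\frac{n}{(n-1)^{\kappa+1}}.
\]
So the corrected statement is still a chain, just with the last two bounds swapped. The middle expression of the original display is the weakest of the three. Your separate exponential-bound derivation of it is therefore unnecessary, since it follows from your $n^{-\kappa}$ bound. Your bound $\gamma_n-\Prob(X=a)\le C\,\Gamma(\kappa+1)\,n^{-\kappa}$ also holds for all $n\ge1$, not only $n\ge2$.
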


\begin{proof}
    Let $U := (X-a)/(b-a) \in [0,1]$ a.s. and observe that $\gamma_n=\E[(1-U)^n]$. Also let $p_0 := \Prob(U=0)=\Prob(X=a)$ and let $F_U(u) := \Prob(U \leq u)$ be the cdf of $U$. Integration by parts for Riemann-Stieltjes integrals \citep[][Theorem 7.6]{apostolMathematicalAnalysisModern1974} gives
    \begin{align*}
        \E[(1-U)^n]
        &=\int_{[0,1]} (1-u)^n \dif F_U(u)
        = (1-u)^n F_U(u)\Big|_{0}^{1}+ n\int_0^1 (1-u)^{n-1}F_U(u)\dif u\\
        &= p_0 + n\int_0^1 (1-u)^{n-1}\left(F_U(u)-p_0\right)\dif u,
    \end{align*}
    and hence
    \begin{equation}\label{eq:gamman0int}
        0 \leq \gamma_n - p_0 
        = n\int_0^1 (1-u)^{n-1}\left(F_U(u)-p_0\right)\dif u,
    \end{equation}
    where the lower bound is due to \cref{cor:lefttailID}.
    For any $u \in [0,1]$ we have
    \[
        F_U(u) - p_0 
        = \Prob(0 < U \leq u) 
        = \Prob(a < X \leq a + (b-a)u)
        = \mu\bigl((a, a + (b-a)u] \bigr) \leq C u^{\kappa}
    \]
    by \cref{eq:lefttail-holder}, and inserting this inequality into the integrand of \cref{eq:gamman0int} yields
    \begin{equation*}
        0 \leq \gamma_n - p_0 
        \leq Cn\int_0^1 (1-u)^{n-1}u^{\kappa}\dif u.
    \end{equation*}
    Applying the basic inequality $(1-u)^{n-1} \leq e^{-(n-1)u}$ for $u \in [0,1]$, we obtain
    \begin{equation*}
        0 \leq \gamma_n-p_0
        \leq Cn\int_0^1 e^{-(n-1)u}u^{\kappa}\dif u
        \leq Cn\int_0^{\infty} e^{-(n-1)u}u^{\kappa}\dif u
        = C\Gamma(\kappa+1)\frac{n}{(n-1)^{\kappa+1}},
    \end{equation*}
    which is the first upper bound in \cref{eq:holderconsequence}. The inequality $n/(n-1)^{\kappa+1}\le n^{-\kappa}$ for $n\ge 2$ gives the second inequality.
\end{proof}

\begin{rem}
    The condition in \cref{eq:lefttail-holder} is equivalent to
    \[
        \left|F_X(a + t) - F(a)\right| \leq \frac{C}{(b-a)^\kappa} t^\kappa
    \]
    for all $t = (b-a)u \in [0, b-a]$, which describes $\kappa$-H\"older continuity of $F_X$ at $a$ from the right.
\end{rem}

To conclude this section, we turn to generating functions. Ordinary generating functions of the raw moment sequence usually only have a finite radius of convergence; for example, when $X \sim \stdunif$, this generating function has the form 
\[
    G_X(z) = \sum_{n=0}^\infty \frac{z^n}{n+1} = -\frac{\log(1-z)}{z},
\]
which has a radius of convergence about $0$ of $1$. In contrast, \cref{thm:representation} implies that the ordinary generating function of the primitive sequence is always entire, which reflects the presence of the factor $1/n!$ in \cref{eq:basicrep}.

\begin{prop}\label{prop:EGF}
    Let
    \[
    G_X(z) := \sum_{n=0}^\infty \E[X^n] z^n
    \qquad\text{and}\qquad
    \tilde{G}_X(z) := \sum_{n=0}^\infty \eps_n z^n.
    \]
    Then the following hold:
    \begin{propitems}
        \item\label{prop:GradofC} $G_X$ has radius of convergence $1/\|X\|_\infty$, and in particular it is finite if and only if $\|X\|_\infty > 0$.
        \item\label{prop:analyticGentire} $\tilde{G}_X$ is an entire function.
    \end{propitems}
\end{prop}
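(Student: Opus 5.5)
For \cref{prop:GradofC}, I will use the Cauchy--Hadamard formula: the radius of convergence of $G_X$ is $1/\limsup_{n} |\E[X^n]|^{1/n}$. The claim $\limsup_n |\E[X^n]|^{1/n} = \|X\|_\infty$ follows from two bounds. The upper bound is immediate: $|\E[X^n]| \leq \E[|X|^n] \leq \|X\|_\infty^n$. For the lower bound, moments of odd order may cancel or vanish, so I will look only at even moments $\E[X^{2n}] = \E[|X|^{2n}]$. Fix $\eta \in (0,\|X\|_\infty)$ and set $p_\eta := \Prob(|X| \geq \|X\|_\infty - \eta)$, which is positive by the definition of the essential supremum. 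Then
\[
\E[X^{2n}] \geq (\|X\|_\infty-\eta)^{2n} p_\eta ,
\]
so $\E[X^{2n}]^{1/(2n)} \geq (\|X\|_\infty-\eta) p_\eta^{1/(2n)} \to \|X\|_\infty - \eta$. Letting $\eta \to 0$ shows that the $\limsup$ along even indices is at least $\|X\|_\infty$. With the convention $1/0=\infty$, this gives radius $1/\|X\|_\infty$. The radius is finite precisely when $\|X\|_\infty>0$. When $\|X\|_\infty = 0$ we have $X=0$ a.s., so $G_X \equiv 1$ and the radius is infinite, consistent with the formula. Finiteness is automatic because $\|X\|_\infty \leq \max(|a|,|b|) < \infty$.

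For \cref{prop:analyticGentire}, I will apply \cref{cor:sequencebounds}, which gives $0 \leq \eps_n \leq (b-a)^n/n!$. Hence $\sum_n |\eps_n||z|^n \leq e^{(b-a)|z|} < \infty$ for every $z\in\C$, so the series converges absolutely on all of $\C$. Equivalently, $\limsup_n \eps_n^{1/n} \leq \lim_n (b-a)/(n!)^{1/n} = 0$. A power series with infinite radius of convergence is entire. As a check, by \cref{thm:representation} and Tonelli/Fubini, $\tilde{G}_X(z) = \E[e^{z(b-X)}]$, which is the moment generating function of $b-X$.

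The only step I expect to need care is the lower bound in part (a). It requires passing to even moments to avoid cancellation, and it requires handling the degenerate case $\|X\|_\infty = 0$ separately.
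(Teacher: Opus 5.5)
Your proof is correct and follows the paper's structure: Cauchy--Hadamard with the trivial upper bound $|\E[X^n]| \le \|X\|_\infty^n$ and a lower bound along even moments for (a), and the factorial bound $0 \le \eps_n \le (b-a)^n/n!$ compared against the exponential series for (b). The only differences are minor. You prove the needed half of $\|X\|_{2k} \to \|X\|_\infty$ directly via $p_\eta$ instead of citing it. In (b) you apply the bound to the coefficients $\eps_n$ themselves, rather than inside the expectation with the M-test and dominated convergence as the paper does. This gives an infinite radius of convergence immediately and is slightly more direct.
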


\begin{proof}
    According to the Cauchy--Hadamard theorem, the radius of convergence of $G_X$ is
    \[
    \frac{1}{\limsup_{n\to\infty} |\E[X^n]|^{1/n}},
    \]
    so it is enough to show that
    \[
    \limsup_{n\to\infty} |\E[X^n]|^{1/n} = \|X\|_\infty.
    \]
    First, since $|X|\le \|X\|_\infty$ a.s., we have
    \[
    |\E[X^n]| \le \E[|X|^n] \le \|X\|_\infty^n,
    \]
    and therefore
    \begin{equation}\label{eq:limsup1}
        \limsup_{n\to\infty} |\E[X^n]|^{1/n} \le \|X\|_\infty.
    \end{equation}
    On the other hand, it is a standard fact of measure theory that $\|X\|_p \nearrow \|X\|_\infty$ as $p\to\infty$. In particular, along the even integers $p=2k$,
    \[
    \left|\E[X^{2k}]\right|^{1/(2k)}
    =
    \left(\E[|X|^{2k}]\right)^{1/(2k)}
    =
    \|X\|_{2k}
    \longrightarrow
    \|X\|_\infty,
    \]
    so
    \begin{equation}\label{eq:limsup2}
        \limsup_{n\to\infty} |\E[X^n]|^{1/n}
        \ge
        \limsup_{k\to\infty} |\E[X^{2k}]|^{1/(2k)}
        =
        \|X\|_\infty.
    \end{equation}
    Combining \cref{eq:limsup1,eq:limsup2} proves \cref{prop:GradofC}.
    
    For \cref{prop:analyticGentire}, fix $z\in\C$. By \cref{thm:representation},
    \[
    \tilde{G}_X(z)
    =
    \sum_{n=0}^\infty \frac{\E[(b-X)^n]}{n!} z^n.
    \]
    Moreover, since $0\le b-X\le b-a$ a.s.,
    \[
    \left|\frac{(b-X)^n z^n}{n!}\right|
    \le
    \frac{(b-a)^n |z|^n}{n!}.
    \]
    Hence for every $R>0$ and every $|z|\le R$,
    \[
    \sum_{n=0}^\infty \left|\frac{(b-X)^n z^n}{n!}\right|
    \le
    \sum_{n=0}^\infty \frac{(b-a)^n R^n}{n!}
    =
    e^{(b-a)R}.
    \]
    By the Weierstrass M-test, the series
    \[
    \sum_{n=0}^\infty \frac{(b-X)^n z^n}{n!}
    \]
    converges uniformly for $|z|\le R$, and therefore locally uniformly on $\C$. Since each partial sum is entire, the limit
    \[
    z \mapsto \sum_{n=0}^\infty \frac{(b-X)^n z^n}{n!}
    \]
    is entire. Taking expectations term-by-term, which is justified by the dominated convergence theorem on each closed disk, yields
    \[
    \tilde{G}_X(z)
    =
    \E\!\left[e^{(b-X)z}\right].
    \]
    Thus $\tilde{G}_X$ is entire.
\end{proof}

According to \cref{cor:sequencebounds}, we have
\[
\eps_n \le \frac{(b-a)^n}{n!}, \qquad n\ge 0.
\]
In particular, the coefficients of the primitive sequence decay at least factorially, uniformly over $\mu\in\cPab$. This same fact appears as a bound on the growth rate of $\tilde{G}_X$. To see this, observe that
\[
\sup_{|z|=r} |\tilde{G}_X(z)|
=
\sup_{|z|=r} \left|\E\!\left[e^{(b-X)z} \right]\right|
\leq
\sup_{|z|=r} \left|\E\!\left[e^{(b-X)\text{Re}(z)}\right]\right|
\le
e^{(b-a)r},
\]
and Cauchy's estimate therefore gives
\[
\eps_n
=
\frac{\tilde{G}_X^{(n)}(0)}{n!}
\le
\frac{e^{(b-a)r}}{r^n},
\qquad n\ge 1.
\]
Optimizing over $r>0$ yields
\[
\eps_n \le \left(\frac{e(b-a)}{n}\right)^n, \qquad n\ge 1.
\]
While this bound is slightly weaker than the bound from \cref{cor:sequencebounds}, it expresses the same factorial decay, since
\[
\left(\frac{e(b-a)}{n}\right)^n \sim \sqrt{2\pi n} \frac{(b-a)^n}{n!} \quad \text{ as } n \to \infty
\]
by Stirling's formula.

\section{Examples of primitive sequences}\label{sec:examples}

In this section, we compute the primitive sequence $\epsseq$ for several important classes of distributions. Our coverage here is not comprehensive; rather, we aim to illustrate how certain qualitative features of the distribution of $X$ (such as discreteness or flatness near the boundary points $a$ and $b$) are manifested in the structure --- and in particular, the rate of decay --- of its associated primitive sequence. We abuse notation slightly and reuse $\eps_n$ across the following examples, where the underlying distribution will be clear from context.

\begin{ex}\label{ex:beta}
    We begin with the Beta family. If $X \sim \mathrm{Beta}(\alpha, \beta)$, it is easy to see that $\E[(1-X)^n]  = \mathrm{B}(\alpha, \beta + n) / \mathrm{B}(\alpha, \beta)$,
    and therefore
    \begin{equation}\label{eq:sequencebeta}
        \eps_n = \frac{1}{n!} \frac{\mathrm{B}(\alpha, \beta + n)}{\mathrm{B}(\alpha, \beta)}.
    \end{equation}
    Using the asymptotic approximation $\mathrm{B}(x,y) \sim \Gamma(x) y^{-x}$ as $y \to \infty$ which follows from Formula~6.1.46 of \cite{abramowitz1988handbook}, \cref{eq:sequencebeta} implies that
    \[
        \eps_n \sim \frac{n^{-\alpha}}{n!}\frac{\Gamma(\alpha)}{\mathrm{B}(\alpha, \beta)}
    \]
    as $n \to \infty$, revealing an interesting asymmetry in the parameters of the distribution: the polynomial correction factor $n^{-\alpha}$ that modifies the baseline $1/n!$ rate of decay of $\eps_n$ (see \cref{cor:sequencebounds}) is governed only by the parameter $\alpha$, which controls the density of the $\mathrm{Beta}(\alpha, \beta)$ distribution at the lower endpoint $a = 0$. On the other hand, the parameter $\beta$, which controls the density's behaviour at $b = 1$, does not appear in the leading-order asymptotics. Thus, the rate at which the normalized sequence $\gamma_n = n! \eps_n$ (see \cref{coro:normalizedsequence}) decays to $0$ quantifies the ``thinness'' of the lower tail of the distribution: $\alpha < 1$ yields a slower decay (and a singularity of the density at $0$), while $\alpha > 1$ yields a faster decay. When $\alpha = 1$, we have $\eps_n = O(1/(n + 1)!)$ in the present $[0,1]$ setting; more generally, on an arbitrary compact interval $[a,b]$, the corresponding bound acquires the baseline factor $(b-a)^{n+1}$.
\end{ex}

\begin{ex}\label{ex:atomic}
    At the opposite extreme, consider a purely atomic  distribution supported on $k \geq 2$ points
    \[
    a \leq c_1 < c_2 < \cdots < c_k \leq b.
    \]
    If $\Prob(X = c_i) = \pi_i$ for $i=1,\ldots,k$, where $\pi_i \in [0,1]$ and $\sum_{i=1}^k \pi_i = 1$, then linearity of expectation gives
    \begin{equation}\label{eq:epspurelyatomicmixture}
        \eps_n = \sum_{i=1}^k \pi_i \frac{(b-c_i)^n}{n!}.
    \end{equation}
    Thus the primitive sequence is a convex combination of the primitive sequences generated by point masses. Because the bases $b-c_1,\ldots,b-c_k$ are distinct, the asymptotic behaviour of the sum is dominated by the largest base, namely $b-c_1$. In particular,
    \[
    \frac{(n+1)\eps_{n+1}}{\eps_n} \longrightarrow b-c_1
    \qquad\text{as } n\to\infty,
    \]
    in agreement with \cref{cor:lefttailID}. Moreover, if $\Delta := c_2-c_1 > 0$, then the contribution of the second atom relative to the first is governed by
    \[
    \left(\frac{b-c_2}{b-c_1}\right)^n
    =
    \left(1-\frac{\Delta}{b-c_1}\right)^n,
    \]
    so a larger separation between the two smallest atoms leads to faster asymptotic dominance by the leftmost atom.
    
    For countably-supported laws, a similar structure persists:
    \[
    \eps_n = \frac{1}{n!}\sum_{i=1}^\infty \pi_i (b-c_i)^n,
    \]
    and the asymptotic behaviour of $n!\eps_n$ is governed by atoms near the infimum of the support of $X$. If the infimum is attained, then the corresponding atom dominates; otherwise, atoms accumulating near the infimum determine the decay of $n!\eps_n$.
    
    We also note that the normalized sequence $\{\gamma_n\}$ from \cref{cor:lefttailID} admits a simple generating function in the atomic case, and particularly so when $b-a = 1$. In this case, writing $q_i := b-c_i$, we have
    \[
    \sum_{n=0}^\infty \gamma_n z^n
    =
    \sum_{n=0}^\infty n!\eps_n z^n
    =
    \sum_{i=1}^\infty \frac{\pi_i}{1-q_i z},
    \qquad
    |z| < \frac{1}{\sup_i q_i}.
    \]
\end{ex}

\begin{ex}\label{ex:piecewisepolynomial}
    Primitive sequences are also relatively easy to compute for piecewise polynomial (i.e., spline) densities, since the antiderivative of a polynomial is another polynomial. For example, if $X$ has a triangular density on $[0,1]$ with a mode at $0$ or $1$, then $1-X$ has a piecewise quadratic cdf, and $\E[(1-X)^n]$ can be computed in closed form as a rational function of $n$. The resulting primitive sequence again decays like $1/n!$, but with polynomial corrections that reflect the non-smoothness of the density at its mode. More generally, if the knots of the density are $a = \xi_0 < \xi_1 < \cdots < \xi_m = b$ and on each subinterval $[\xi_{i-1}, \xi_i]$ the density of $X$ is given by a polynomial $p_i$, then repeated integration by parts shows that $\eps_n$ admits an expansion of the form
    \begin{equation}\label{eq:piecewisepoly}
        \eps_n = \sum_{j=0}^{d_m} \frac{1}{(n+j+1)!}\left[p_1\anti{j}(a) \cdot (b-a)^{n+j+1} + \sum_{i=1}^{m-1} \left(p_{i+1}\anti{j}(\xi_i) - p_i\anti{j}(\xi_i) \right) \cdot (b-\xi_i)^{n+j+1}\right],
    \end{equation}
    where $d_m := \max\{\mathrm{deg}(p_1), \ldots, \mathrm{deg}(p_m)\}$. 
    In particular, if $m=1$ and $[a,b]=[0,1]$ with
    \[
    p_1(x)=\sum_{j=0}^d a_j x^j,
    \]
    then
    \[
    \eps_n = \sum_{j=0}^d \frac{a_j\,j!}{(n+j+1)!}.
    \]
    It is interesting to compare \cref{eq:epspurelyatomicmixture} to \cref{eq:piecewisepoly}. For purely atomic distributions, each atom $c_i$ contributes to $\eps_n$ in the form of the exponential term $(b-c_i)^n/n!$. In contrast, for piecewise polynomial densities, a jump in the $j$th derivative of the density contributes to $\eps_n$ at the $1/(n+j+1)!$ scale, so each knot $\xi_i$ is effectively a ``smoothed'' atom. In this sense, the primitive sequence detects smoothness: if the density is $k$-times continuously differentiable at a knot $\xi_i$, then $p_{i+1}\anti{j}(\xi_i) - p_i\anti{j}(\xi_i) = 0$ for all $j \leq k$, and hence the contribution of $\xi_i$ to $\eps_n$ is pushed into higher-order terms starting at $1/(n+k+2)!$.
\end{ex}

\cref{ex:beta,ex:atomic,ex:piecewisepolynomial} demonstrate that smooth densities lead to primitive sequences which decay rapidly, while mass near the boundary slows down the decay in a quantifiable fashion; discrete components, when present, contribute exponential terms of the form $(b-c)^n/n!$. Another useful feature of the primitive sequence is its invariance with respect to mixtures. If $\mu = \int_{A} \mu_{\alpha} \nu(\dif \alpha)$ where $\nu$ is a probability measure on $A \subseteq \R$ and $\mu_\alpha \in \cPab$ for each $\alpha \in A$, then Fubini's theorem implies
\begin{equation}\label{eq:closedundermixtures}
    \eps_n(\mu) = \int_A \eps_n(\mu_\alpha) \nu(\dif \alpha).
\end{equation}
Thus, the set of primitive sequences is convex, and through \cref{thm:representation} this convexity is exactly the classical convexity of (scaled) Hausdorff moment sequences.

\section{Admissible sequences, the primitive sequence map, and identifiability}\label{sec:admissiblesequences}

In \cref{sec:reptheorem,sec:examples} we started from a probability measure $\mu \in \cPab$ and constructed its associated primitive sequence $\epsseq$. We now reverse this perspective and study the image of $\cPab$ under this construction. The main result of this section is that, through the identification in \cref{thm:representation}, admissibility and topology of primitive sequences are governed by the classical Hausdorff moment problem.

We call a real-valued sequence $\epsseq$ \emph{admissible} for the interval $[a,b]$ if there exists a cdf $F$ with $F(a-) = 0$ and $F(b) = 1$ such that $\eps_n = F\anti{-n}(b)$ for each $n \geq 0$. Equivalently, by \cref{thm:representation}, $\epsseq$ is admissible for $[a,b]$ if and only if there exists a probability measure $\mu \in \cPab$ such that
\begin{equation*}
    \eps_n = \frac{1}{n!} \int_{[a,b]} (b-x)^n \mu(\dif x), \qquad n \geq 0.
\end{equation*}
Thus admissibility is a global constraint on the entire sequence $\epsseq$; in particular, the elementary properties in \cref{cor:sequencebounds,cor:log-convex} are necessary but not sufficient. Our goal is therefore to characterize admissibility without referring explicitly to a representing measure. This characterization is provided by the Hausdorff moment problem \citep{hausdorff1921summationsmethoden}; see also Chapter VII, Section 3 of \cite{fellerIntroductionProbabilityTheory1968} and \cite{schmudgenMomentProblem2017}. The next theorem essentially translates that classical criterion into primitive sequence coordinates.

\begin{thm}\label{thm:admissibilitycond}
    A sequence $\epsseq$ is admissible for $[a,b]$ if and only if the normalized sequence $\{\gamma_n\}_n$ given by
    \begin{equation*}
        \gamma_n = \frac{n! \eps_n}{(b-a)^n}, \qquad n \geq 0
    \end{equation*}
    has $\gamma_0 = 1$ and is completely monotone --- that is,
    \[
    (-1)^k (\Delta^k \gamma)_n \geq 0, \qquad n,k \geq 0,
    \]
where the forward difference operator $\Delta$ is defined by $(\Delta \gamma)_n = \gamma_{n+1} - \gamma_n$ and $\Delta^k$ denotes its $k$-fold iterate.
\end{thm}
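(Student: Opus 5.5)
The plan is to reduce the statement to the classical Hausdorff moment theorem on $[0,1]$ via the affine change of variables $U := (b-X)/(b-a)$, which maps $[a,b]$ onto $[0,1]$ (reversing orientation). By \cref{thm:representation}, for any $\mu \in \cPab$ we have $\gamma_n = \E[U^n]$, so $\{\gamma_n\}_n$ is exactly the raw moment sequence of the pushforward measure $\nu := \mu \circ T^{-1}$ on $[0,1]$, where $T(x) = (b-x)/(b-a)$. Since $T$ is a homeomorphism of $[a,b]$ onto $[0,1]$, pushforward under $T$ is a bijection from $\cPab$ to $\cP_{[0,1]}$, with inverse given by pushforward under $T^{-1}(u) = b-(b-a)u$. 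Thus admissibility of $\epsseq$ for $[a,b]$ is equivalent to $\{\gamma_n\}_n$ being the moment sequence of some Borel probability measure on $[0,1]$.

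For necessity, assume $\epsseq$ is admissible with representing $\mu$. Then $\gamma_0 = \E[U^0] = 1$. For complete monotonicity, I will verify by induction on $k$ that
\begin{equation*}
(-1)^k(\Delta^k\gamma)_n = \E\left[U^n(1-U)^k\right].
\end{equation*}
The case $k=0$ is immediate, and the inductive step uses $(-1)^{k+1}(\Delta^{k+1}\gamma)_n = (-1)^k(\Delta^k\gamma)_n - (-1)^k(\Delta^k\gamma)_{n+1} = \E[U^n(1-U)^k(1-U)]$. Since $U \in [0,1]$ a.s., the integrand is non-negative, so $(-1)^k(\Delta^k\gamma)_n \geq 0$.

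For sufficiency, suppose $\gamma_0 = 1$ and $\{\gamma_n\}_n$ is completely monotone. Hausdorff's theorem \citep{hausdorff1921summationsmethoden} (see Feller, Ch.~VII.3) gives a finite positive Borel measure $\nu$ on $[0,1]$ with $\gamma_n = \int u^n \nu(\dif u)$ for all $n$. The condition $\gamma_0 = 1$ makes $\nu$ a probability measure. Set $\mu := \nu \circ (T^{-1})^{-1} \in \cPab$. Then
\begin{equation*}
\frac{1}{n!}\int_{[a,b]}(b-x)^n\mu(\dif x) = \frac{(b-a)^n}{n!}\int_{[0,1]} u^n\nu(\dif u) = \frac{(b-a)^n\gamma_n}{n!} = \eps_n.
\end{equation*}
This is precisely the integral characterization of admissibility stated before the theorem (equivalently, via \cref{thm:representation} with $F = F_\mu$, which satisfies $F(a-)=0$ and $F(b)=1$).

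There is no real obstacle beyond invoking Hausdorff's theorem as a black box. The only points requiring care are bookkeeping. First, one must keep the orientation reversal straight: moments of $U$ correspond to moments of $b-X$, not of $X-a$. Second, the normalization $\gamma_0 = 1$ is exactly what upgrades Hausdorff's finite measure to a probability measure, which in turn yields a genuine cdf with the required boundary values.
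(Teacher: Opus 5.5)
Your proposal is correct and follows the paper's proof. Both arguments reduce to the Hausdorff moment theorem via the reflection $U=(b-X)/(b-a)$ (the paper writes $X=b-(b-a)Y$ for the converse), with $\gamma_0=1$ upgrading Hausdorff's representing measure to a probability measure. The only difference is that you verify necessity directly through the identity $(-1)^k(\Delta^k\gamma)_n=\E[U^n(1-U)^k]$ rather than citing Hausdorff for that direction, which is a small self-contained improvement.
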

\begin{proof}
    Suppose that $\epsseq$ is admissible for $[a,b]$. Then there exists a random variable $X$ with $X \in [a,b]$ a.s. such that $\eps_n = \E[(b-X)^n]/n!$ by \cref{thm:representation}. Therefore,
    \begin{equation*}
        \gamma_n = \frac{n! \eps_n}{(b-a)^n} = \frac{\E[(b-X)^n]}{(b-a)^n} = \E\left[\left(\frac{b-X}{b-a}\right)^n\right],
    \end{equation*}
    for every $n \geq 0$, so that $\{\gamma_n\}_n$ is the raw moment sequence of the random variable $(b-X)/(b-a) \in [0,1]$ a.s. and $\gamma_0 = 1$. By the classical Hausdorff moment problem, this condition is equivalent to complete monotonicity of $\{\gamma_n\}_n$.

    Conversely, suppose that $\{\gamma_n\}_n$ is completely monotone and $
    \gamma_0 = 1$. By the Hausdorff moment problem, there exists a probability measure $\nu$ on $[0,1]$ and a random variable $Y \sim \nu$ such that $\gamma_n = \E[Y^n]$ for $n \geq 0$. Define $X := b - (b-a)Y$ so that $X \in [a,b]$ a.s. and let $F_X$ be its cdf. Then by \cref{thm:representation},
    \begin{equation*}
        F_X\anti{-n}(b)
        = \frac{\E[(b-X)^n]}{n!} 
        = \frac{\E[((b-a)Y)^n]}{n!}
        =
        \frac{(b-a)^n  \gamma_n}{n!}
        =
        \eps_n.
    \end{equation*}
    Thus, the sequence $\epsseq$ arises from a cdf supported on $[a,b]$ (i.e., $F_X$), and is therefore admissible.
\end{proof}

We now turn to geometry by studying the map $\mu \mapsto \{\eps_n(\mu)\}_n$. To formalize ideas, let $\cAab$ be the set of admissible sequences for $[a,b]$, which by \cref{thm:representation} can be written as
\begin{equation*}
    \cAab = \{ \epsseq \in \R^\N: \text{there exists } X \in [a,b] \text{ a.s. with } \eps_n = \E[(b-X)^n]/n!\}.
\end{equation*}
Define the \emph{primitive sequence map}
\begin{equation*}
    S: \cPab \to \prod_{n =0}^\infty \left[0, \frac{(b-a)^n}{n!}\right], \qquad S(\mu) = \{\eps_n(\mu)\}_n,
\end{equation*}
where
    \begin{equation*}
        \eps_n(\mu) := \frac{1}{n!} \int_{[a,b]} (b-x)^n \mu(\dif x) 
        = \frac{\E[(b-X)^n]}{n!}, \qquad X \sim \mu,
    \end{equation*}
so that $\cAab = S(\cPab)$. The next lemma records two basic properties of $S$:

\begin{lemm}\label{lemm:Taffine}
    The primitive sequence map $S$ is affine on $\cPab$ and continuous when $\cPab$ is equipped with the weak topology and $\prod_{n = 0}^\infty \left[0, (b-a)^n/n!\right]$ is equipped with the product topology. In particular, if $\mu_k \wc \mu$, then $\eps_n(\mu_k) \to \eps_n(\mu)$ for every $n \geq 0$. 
\end{lemm}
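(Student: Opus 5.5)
The plan is to treat each coordinate $\eps_n$ as integration against a fixed continuous test function, so that both affineness and continuity follow coordinatewise. For each $n \geq 0$ set $f_n(x) := (b-x)^n/n!$, which belongs to $C([a,b])$, so that $\eps_n(\mu) = \int_{[a,b]} f_n \dif\mu$ by \cref{thm:representation} (equivalently, by the definition of $S$). That $S$ maps into $\prod_{n}[0,(b-a)^n/n!]$ is already \cref{cor:sequencebounds}.

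For affineness, I will take $\mu,\nu \in \cPab$ and $\lambda \in [0,1]$. Then $\lambda\mu + (1-\lambda)\nu \in \cPab$, and linearity of the integral in the measure gives
\[
\eps_n(\lambda\mu + (1-\lambda)\nu) = \lambda \eps_n(\mu) + (1-\lambda)\eps_n(\nu)
\]
for every $n$. Since the vector operations on $\R^\N$ are coordinatewise, this is exactly $S(\lambda\mu+(1-\lambda)\nu) = \lambda S(\mu) + (1-\lambda)S(\nu)$. Alternatively, this is the two-point case of \cref{eq:closedundermixtures}.

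For continuity, the product topology is the topology of coordinatewise convergence, and it is the initial topology generated by the coordinate projections $\pi_n$. So $S$ is continuous if and only if each $\pi_n \circ S = \eps_n(\cdot)$ is continuous. The weak topology on $\cPab$ is by definition the coarsest topology making $\mu \mapsto \int g \dif\mu$ continuous for every $g \in C([a,b])$. Since $f_n \in C([a,b])$, each $\eps_n(\cdot)$ is continuous, and hence $S$ is continuous.

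The sequential statement then follows directly: if $\mu_k \wc \mu$, the definition of weak convergence applied to the bounded continuous $f_n$ gives $\eps_n(\mu_k) \to \eps_n(\mu)$ for every $n$. There is no serious obstacle here. The only point needing care is to argue continuity via the initial-topology characterization rather than only sequentially. This could alternatively be justified by noting that $\cPab$ is metrizable (for example by the Lévy–Prokhorov metric) and the product space is metrizable, but the initial-topology argument avoids needing that.
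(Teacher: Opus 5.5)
Your proposal is correct and follows the paper's approach: affineness comes from linearity of the integral in the measure, and continuity is checked coordinatewise because each $\eps_n$ is integration against the bounded continuous function $(b-x)^n/n!$. The only difference is that the paper argues continuity sequentially via the Portmanteau lemma, which implicitly relies on metrizability of both spaces. Your argument via the universal property of the product topology and the definition of the weak topology gives topological continuity directly.
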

\begin{proof}
    That $S$ is affine follows from linearity of the integral: if $\mu_1, \mu_2 \in \cPab$, then for $\lambda \in [0,1]$ and $n \geq 0$ we have
    \begin{align*}
        \eps_n( \lambda \mu_1 + (1-\lambda)\mu_2) &= \frac{1}{n!} \int_{[a,b]} (b-x)^n (\lambda \mu_1(\dif x) + (1-\lambda)\mu_2(\dif x)) \\
        &= \lambda \frac{1}{n!} \int_{[a,b]} (b-x)^n \mu_1(\dif x) + (1-\lambda)\frac{1}{n!} \int_{[a,b]} (b-x)^n \mu_2(\dif x) \\
        &= \lambda \eps_n(\mu_1) + (1-\lambda) \eps_n(\mu_2),
    \end{align*}
    and so $S$ is affine. To prove continuity, observe that $x \mapsto (b-x)^n/n!$ is bounded and continuous on $[a,b]$; therefore, if $\mu_k \wc \mu$, then the Portmanteau lemma \cite[][Lemma 2.2]{vandervaartAsymptoticStatistics1998} implies
    \begin{equation*}
        \frac{1}{n!}\int_{[a,b]} (b-x)^n \mu_k(\dif x) \longrightarrow \frac{1}{n!}\int_{[a,b]} (b-x)^n \mu(\dif x), \qquad n \geq 0,
    \end{equation*}
    as required.
\end{proof}

\cref{lem:Pabcompact} below records the compactness of $\cPab$, after which the corresponding geometric and topological properties of $\cAab$ follow immediately.

\begin{lemm}\label{lem:Pabcompact}
    $\cPab$ is compact with respect to the weak topology.
\end{lemm}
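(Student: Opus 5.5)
The plan is to prove sequential compactness via Prokhorov's theorem, and then close the loop by showing that the weak topology on $\cPab$ is metrizable, so that sequential compactness is the same as compactness. Let $\{\mu_k\}_k \subseteq \cPab$ be arbitrary. Every $\mu_k$ puts full mass on the compact set $[a,b]$, so the family is trivially tight: for any $\eta>0$ the single compact set $K=[a,b]$ satisfies $\mu_k(K)=1\geq 1-\eta$ for all $k$. Prokhorov's theorem (or Helly's selection theorem applied to the cdfs $F_{\mu_k}$) then gives a subsequence $\mu_{k_j}$ and a Borel probability measure $\mu$ on $\R$ with $\mu_{k_j}\wc\mu$.

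The next step is to check that the limit $\mu$ lies in $\cPab$, i.e.\ that no mass escapes the interval. Since $[a,b]$ is closed, the Portmanteau lemma gives
\[
\mu([a,b]) \geq \limsup_{j\to\infty}\mu_{k_j}([a,b]) = 1.
\]
Hence $\mu$ is supported on $[a,b]$, so $\mu\in\cPab$. Therefore every sequence in $\cPab$ has a weakly convergent subsequence with limit in $\cPab$.

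To upgrade sequential compactness to compactness, note that the weak topology on probability measures over the separable metric space $[a,b]$ is metrizable. One can use the Lévy--Prokhorov metric, or the concrete metric $d(\mu,\nu)=\sum_{j}2^{-j}\min\{1,|\int f_j\,\dif\mu-\int f_j\,\dif\nu|\}$, where $\{f_j\}$ is a countable dense subset of $C([a,b])$ (dense by separability, e.g.\ polynomials with rational coefficients). In a metric space, sequential compactness and compactness coincide, which finishes the proof.

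As an alternative route that avoids Prokhorov, one could identify $\cPab$ with the set of positive, normalized linear functionals on $C([a,b])$ via the Riesz representation theorem. That set is weak-$*$ closed inside the unit ball of $C([a,b])^*$, which is compact by Banach--Alaoglu, and the weak-$*$ topology restricted to $\cPab$ is exactly the weak topology. The main (mild) obstacle in either route is only bookkeeping: making sure the limit stays supported on $[a,b]$, which is handled by the closed-set Portmanteau inequality above or, in the Riesz route, by positivity together with $\Lambda(1)=1$.
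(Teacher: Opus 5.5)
Your proof is correct and follows the paper's argument: tightness from the single compact set $[a,b]$, Prokhorov's theorem for subsequential limits, and the closed-set Portmanteau inequality $\mu([a,b]) \geq \limsup_j \mu_{k_j}([a,b]) = 1$ to keep the limit in $\cPab$. Your metrizability step, which justifies arguing with sequences, makes explicit a point the paper leaves implicit, and the Banach--Alaoglu alternative is also valid; neither changes the core approach.
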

\begin{proof}
    $\cPab$ is tight because $[a,b]$ is compact; indeed, for any $\mu \in \cPab$, given any $\eta > 0$, taking the compact set $K = [a,b]$ gives $\mu(K) = 1 > 1 - \eta$. By Prokhorov's theorem, $\cPab$ is relatively compact. Moreover, if $\mu_k \in \cPab$ for $k \geq 1$ and $\mu_k \wc \mu^*$, then the Portmanteau lemma gives $\mu^*([a,b]) \geq \limsup_{k \to \infty} \mu_k([a,b]) = 1$ because $[a,b]$ is closed, and hence $\mu^*([a,b]) = 1$ and so $\mu^* \in \cPab$. Thus $\cPab$ is closed, and hence compact.
\end{proof}

\begin{coro}\label{cor:convexcompact}
    $\cAab$ is both convex and compact with respect to the product topology on the product space $\prod_{n \geq 0} [0, (b-a)^n/n!]$.
\end{coro}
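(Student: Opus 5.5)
Both claims follow from $\cAab = S(\cPab)$ together with \cref{lemm:Taffine} and \cref{lem:Pabcompact}: convexity comes from $S$ being affine on a convex domain, and compactness from $S$ being continuous on a compact domain.

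\emph{Convexity.} First I will note that $\cPab$ is convex. If $\mu_1,\mu_2\in\cPab$ and $\lambda\in[0,1]$, then $\lambda\mu_1+(1-\lambda)\mu_2$ is a Borel probability measure giving full mass to $[a,b]$. Now take two admissible sequences $S(\mu_1), S(\mu_2) \in \cAab$. By the affine property in \cref{lemm:Taffine},
\[
\lambda S(\mu_1)+(1-\lambda)S(\mu_2) = S\bigl(\lambda\mu_1+(1-\lambda)\mu_2\bigr)\in\cAab .
\]
This identity holds coordinatewise, and the convex combination is taken in $\R^{\N}$. Alternatively, one can cite the mixture identity \cref{eq:closedundermixtures}.

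\emph{Compactness.} By \cref{lem:Pabcompact}, $\cPab$ is weakly compact. By \cref{lemm:Taffine}, $S$ is continuous from the weak topology to the product topology. The continuous image of a compact space is compact, so $\cAab=S(\cPab)$ is compact in $\prod_{n\ge0}[0,(b-a)^n/n!]$. The codomain is correct because \cref{cor:sequencebounds} places each $\eps_n(\mu)$ in $[0,(b-a)^n/n!]$.

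\emph{Main obstacle.} The only delicate point is topological. \cref{lemm:Taffine} is phrased with sequences ($\mu_k\wc\mu$), whereas the image-of-a-compact argument needs topological continuity. I plan to handle this in one of two ways. First, the weak topology is the initial topology generated by the maps $\mu\mapsto\int f\,d\mu$ for $f\in C([a,b])$, and each coordinate $\mu\mapsto\eps_n(\mu)$ is one of these maps. So each coordinate is continuous, and a map into a product space is continuous exactly when its coordinates are. Second, both spaces are metrizable: $\cPab$ via the Lévy–Prokhorov metric, and the countable product of compact intervals via $\sum_n 2^{-n}\min(1,|x_n-y_n|)$. On metrizable spaces, sequential continuity and sequential compactness agree with their topological counterparts, so the sequential statement suffices. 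With either route in place, the proof is complete.
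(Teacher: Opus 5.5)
Your proposal is correct and matches the paper's proof: convexity follows because $S$ is affine on the convex set $\cPab$ (\cref{lemm:Taffine}), and compactness because $\cAab$ is the continuous image of the weakly compact $\cPab$ (\cref{lem:Pabcompact}). Your extra step, reconciling the sequential wording of \cref{lemm:Taffine} with topological continuity via the initial-topology description or via metrizability, is valid and settles a point the paper leaves implicit.
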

\begin{proof}
    Let $\{\eps\anti{1}_n\}_n, \{\eps\anti{2}_n\}_n \in \cAab$ so that there exist $\mu_1, \mu_2 \in \cPab$ with $S(\mu_i) = \{\eps\anti{i}_n\}_n$ for $i=1,2$. Since $\cPab$ is convex, we have $\lambda \mu_1 + (1-\lambda)\mu_2 \in \cPab$ for any $\lambda \in [0,1]$, and since $S$ is affine by \cref{lemm:Taffine},
    \[
        \lambda \{\eps\anti{1}_n\}_n + (1-\lambda)\{\eps\anti{2}_n\}_n
        =
        S(\lambda \mu_1 + (1-\lambda)\mu_2)
        \in \cAab.
    \]
    Hence $\cAab$ is convex. Since $\cPab$ is compact with respect to the weak topology by \cref{lem:Pabcompact}, and $S$ is continuous by \cref{lemm:Taffine}, its image
    \[
        \cAab = S(\cPab)
    \]
    is compact with respect to the product topology.
\end{proof}

The compactness of $\cAab$ guarantees that continuous functionals on $\cAab$ attain their extrema, and the convexity of $\cAab$ at least suggests that these extrema should occur at the extreme points of $\cAab$. This naturally raises the question of whether such extrema are unique, or more generally, whether an underlying measure $\mu \in \cPab$ is identifiable from the primitive sequence $\{\eps_n(\mu)\}_n$. \cref{thm:admissibilitycond} by itself does not preclude different measures from sharing the same primitive sequence. However, \cref{lemm:Tinjective} below shows that this cannot happen:

\begin{lemm}\label{lemm:Tinjective}
    The primitive sequence map $S$ is injective: if $\mu_1, \mu_2 \in \cPab$ are such that $S(\mu_1) = S(\mu_2)$, then $\mu_1 = \mu_2$; thus, the sequence $\{\eps_n(\mu)\}_n$ uniquely determines $\mu$.
\end{lemm}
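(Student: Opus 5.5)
The plan is to reduce injectivity of $S$ to the classical fact that a finite Borel measure on a compact interval is determined by its integrals against polynomials, and then to close the argument with the Weierstrass approximation theorem and the Riesz representation theorem (or a monotone class argument).

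First, suppose $S(\mu_1) = S(\mu_2)$, so that $\eps_n(\mu_1) = \eps_n(\mu_2)$ for all $n \geq 0$. Multiplying by $n!$ gives
\[
    \int_{[a,b]} (b-x)^n \,\mu_1(\dif x) = \int_{[a,b]} (b-x)^n \,\mu_2(\dif x), \qquad n \geq 0.
\]
By \cref{cor:momentstoeps}, applied with $X_i \sim \mu_i$, the raw moments $\int x^n \mu_i(\dif x)$ are fixed linear combinations of $\eps_0, \ldots, \eps_n$ with coefficients independent of $\mu_i$. Hence the raw moments of $\mu_1$ and $\mu_2$ agree for every $n$, and by linearity $\int p \dif\mu_1 = \int p \dif\mu_2$ for every polynomial $p$.

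Second, fix $f \in C([a,b])$ and $\eta > 0$. Weierstrass approximation gives a polynomial $p$ with $\sup_{[a,b]} |f - p| < \eta$. Since both measures are probability measures concentrated on $[a,b]$,
\[
    \left|\int f \dif\mu_1 - \int f \dif\mu_2\right| \leq 2\eta .
\]
Letting $\eta \to 0$ shows that $\int f\dif\mu_1 = \int f \dif\mu_2$ for all $f \in C([a,b])$.

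Finally, I would deduce $\mu_1 = \mu_2$ on $\cB_{[a,b]}$. This last step is the only place requiring some care, though it is standard. The cleanest route is uniqueness in the Riesz representation theorem, since finite Borel measures on the compact metric space $[a,b]$ are regular. If one wants to avoid citing Riesz, an equally elementary route is to approximate the indicator of $[a,t]$ from above by the continuous piecewise-linear functions $f_k$ that equal $1$ on $[a,t]$, vanish on $[t+1/k, b]$, and are linear in between. Dominated convergence then gives $F_{\mu_1}(t) = F_{\mu_2}(t)$ for every $t$, and a measure on $\R$ is determined by its cdf, since the intervals $(-\infty,t]$ form a $\pi$-system generating the Borel sets. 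I expect no real obstacle; the main point is to note that equality of the primitive sequences transfers to equality of all polynomial integrals through \cref{cor:momentstoeps}.
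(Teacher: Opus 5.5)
Your proposal is correct and follows essentially the same route as the paper: transfer equality of the primitive sequences to equality of all polynomial integrals (the paper's ``affine changes of variable and linearity'', which is exactly what \cref{cor:momentstoeps} encodes), extend to $C([a,b])$ by Weierstrass approximation, and conclude with uniqueness in the Riesz representation theorem. Your alternative ending, approximating $\mathbf{1}_{[a,t]}$ by continuous functions and then using the $\pi$-system of half-lines, is also valid and simply avoids citing Riesz.
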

\begin{proof}
    If $S(\mu_1) = S(\mu_2)$, then 
    \begin{equation*}
        \frac{1}{n!}\int_{[a,b]} (b-t)^n \mu_1(\dif t) = \frac{1}{n!}\int_{[a,b]} (b-t)^n \mu_2(\dif t), \qquad n \geq 0.
    \end{equation*}
    Through finitely many affine changes of variable and linearity, it follows that 
    \begin{equation*}
        \int_{[a,b]} p(t) \mu_1(\dif t) = \int_{[a,b]} p(t) \mu_2(\dif t)
    \end{equation*}
    for any polynomial $p$, so by the Weierstrass approximation theorem, we have
    \begin{equation*}
        \int_{[a,b]} f(t) \mu_1(\dif t) = \int_{[a,b]} f(t) \mu_2(\dif t)
    \end{equation*}
    for any $f \in C([a,b])$. By the Riesz representation theorem \citep[][Theorem 2.14]{rudinRealComplexAnalysis2013}, positive Borel measures on $[a,b]$ are uniquely determined by their integrals against continuous test functions; it follows that $\mu_1 = \mu_2$.
\end{proof}

We can immediately strengthen the identifiability shown by \cref{lemm:Tinjective} to limiting cases by showing that termwise convergence of primitive sequences is equivalent to weak convergence of measures, formalizing how primitive sequences parametrize $\cPab$. We first need the following simple consequence of Prokhorov's theorem \citep[][Theorem 5.1]{billingsleyConvergenceProbabilityMeasures1999}, which will be used again in \cref{sec:finitetruncation}:

\begin{thm}\label{thm:Thomeo}
    Equip $\cPab$ with the weak topology and $\cAab$ with the subspace topology inherited from the product topology on $\prod_{n = 0}^\infty [0, (b-a)^n/n!]$. Then the primitive sequence map $S$ is a homeomorphism between $\cPab$ and $\cAab$; in particular, for $\mu_k, \mu \in \cPab$ with $k \geq 1$, 
    \begin{equation*}
        \mu_k \wc \mu \quad \Longleftrightarrow \quad \eps_n(\mu_k) \to \eps_n(\mu), \quad n \geq 0.
    \end{equation*}
\end{thm}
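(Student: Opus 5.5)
By \cref{lemm:Taffine,lemm:Tinjective}, $S$ is a continuous bijection from $\cPab$ onto its image $\cAab = S(\cPab)$. The plan is to obtain the homeomorphism from the compact-to-Hausdorff principle and then derive the sequential equivalence, with a direct Prokhorov subsequence argument as a backup.

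\textbf{Step 1: topology.} By \cref{lem:Pabcompact}, $\cPab$ is compact in the weak topology. The codomain $\prod_{n\ge0}[0,(b-a)^n/n!]$ is a product of Hausdorff spaces, hence Hausdorff, and so is its subspace $\cAab$.

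\textbf{Step 2: closed map.} A continuous bijection from a compact space onto a Hausdorff space is closed. Indeed, a closed subset $C\subseteq\cPab$ is compact, so $S(C)$ is compact and therefore closed in $\cAab$. Hence $S^{-1}$ is continuous and $S$ is a homeomorphism.

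\textbf{Step 3: the equivalence.} The forward direction ($\Rightarrow$) is exactly \cref{lemm:Taffine}. For the converse, note that convergence in the product topology is termwise convergence. Continuity of $S^{-1}$ then gives the backward direction ($\Leftarrow$), provided the weak topology on $\cPab$ is sequential. This holds because the weak topology on probability measures on the compact metric space $[a,b]$ is metrizable, for example by the L\'evy--Prokhorov metric.

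\textbf{Backup argument for ($\Leftarrow$).} Alternatively, one can argue directly with subsequences, avoiding metrizability. Suppose $\eps_n(\mu_k)\to\eps_n(\mu)$ for all $n$. Take any subsequence of $\{\mu_k\}_k$. By Prokhorov's theorem and tightness on $[a,b]$, as in \cref{lem:Pabcompact}, it has a further subsequence converging weakly to some $\nu\in\cPab$. By \cref{lemm:Taffine}, along that further subsequence $\eps_n(\mu_k)\to\eps_n(\nu)$. Therefore $S(\nu)=S(\mu)$, and \cref{lemm:Tinjective} gives $\nu=\mu$. Since every subsequence has a further subsequence converging weakly to $\mu$, we conclude $\mu_k\wc\mu$. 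This last step uses the standard subsequence criterion, which is justified by testing against a fixed $f\in C([a,b])$: the real sequence $\int f\,\dif\mu_k$ has the property that every subsequence has a further subsequence converging to $\int f\,\dif\mu$, so the whole sequence converges to it.

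\textbf{Main obstacle.} The only delicate point is the passage from continuity of $S^{-1}$ to the sequential statement, that is, confirming that sequential reasoning suffices in the weak topology. The subsequence argument above handles this without appealing to metrizability.
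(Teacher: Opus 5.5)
Your proof is correct and follows the same route as the paper. Continuity and injectivity of $S$ make it a continuous bijection from the compact space $\cPab$ onto the Hausdorff space $\cAab$, hence a homeomorphism, and the sequential equivalence then follows from continuity of $S$ and $S^{-1}$. The caveat in your Step~3 is unnecessary, because a continuous map preserves convergent sequences in any topological space; sequentiality or metrizability is needed only for the converse implication. That is why the paper passes directly from continuity of $S^{-1}$ to $\mu_k \wc \mu$, so your Prokhorov subsequence argument is valid but redundant.
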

\begin{proof}
    First, note that since $S$ is injective by \cref{lemm:Tinjective} and surjective by definition, it is a  bijection from $\cPab$ to $\cAab$. Moreover, by \cref{lemm:Taffine}, $S$ is continuous when $\prod_{n=0}^\infty [0, (b-a)^n/n!]$ is equipped with the product topology, and this continuity is preserved when we restrict $S$ to the subspace $\cAab$; thus, $S$ is a continuous bijection from $\cPab$ onto $\cAab$. Now, observe that each interval $[0, (b-a)^n/n!]$ is Hausdorff, and so the product space $\prod_{n=0}^\infty [0, (b-a)^n/n!]$ is also Hausdorff \citep[Theorem 31.2(a)]{munkresTopology2018};  therefore its subspace $\cAab$ is Hausdorff as well. Since $\cPab$ is compact by \cref{lem:Pabcompact}, $S$ is a continuous bijection from a compact space to a Hausdorff space. This in turn implies that $S$ is a homeomorphism onto its image $\cAab$ \citep[Theorem 26.2]{munkresTopology2018}.

    Next, if $\mu_k \wc \mu$, then $\eps_n(\mu_k) \to \eps_n(\mu)$ as $k \to \infty$ for each $n \geq 0$ by \cref{lemm:Taffine}; that is, $S(\mu_k) \to S(\mu)$ in the product topology. Conversely, if $\eps_n(\mu_k) \to \eps_n(\mu)$ for each $n \geq 0$, then $S(\mu_k) \to S(\mu)$ in the product topology, and by continuity of the inverse mapping $S^{-1}$, we obtain 
    \begin{equation}
        \mu_k = S^{-1}(S(\mu_k)) \wc S^{-1}(S(\mu)) = \mu,
    \end{equation}
    which completes the proof.
\end{proof}

The results of \cref{lemm:Tinjective,thm:Thomeo} can be combined neatly:

\begin{coro}
    An admissible sequence $\epsseq \in \cAab$ uniquely determines a probability measure $\mu \in \cPab$.
\end{coro}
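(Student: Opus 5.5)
The claim has two parts: \emph{existence} of a representing measure for every $\epsseq \in \cAab$, and its \emph{uniqueness}. I will prove each from the definitions and earlier lemmas. I will also record, as a bonus, that this measure depends continuously on the sequence.

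\textbf{Existence.} This comes straight from the definition $\cAab = S(\cPab)$. If $\epsseq \in \cAab$, there is some $\mu \in \cPab$ with $S(\mu) = \epsseq$. Equivalently, by \cref{thm:representation}, there is a random variable $X \in [a,b]$ a.s. with $\eps_n = \E[(b-X)^n]/n!$ for all $n \ge 0$.

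\textbf{Uniqueness.} This follows from \cref{lemm:Tinjective}. Suppose $\mu_1, \mu_2 \in \cPab$ both satisfy $S(\mu_1) = \epsseq = S(\mu_2)$. Injectivity of $S$ then forces $\mu_1 = \mu_2$. Hence the inverse $S^{-1}: \cAab \to \cPab$ is well defined, and $\mu = S^{-1}(\epsseq)$ is the unique measure determined by $\epsseq$.

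\textbf{Continuous dependence.} By \cref{thm:Thomeo}, $S^{-1}$ is continuous from the product topology on $\cAab$ to the weak topology on $\cPab$. So the measure determined by $\epsseq$ varies continuously with the sequence: termwise convergence of admissible sequences gives weak convergence of the corresponding measures.

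\textbf{Reconstruction via Hausdorff (optional).} For a statement phrased purely in sequence terms, \cref{thm:admissibilitycond} supplies the measure concretely. Let $\nu$ be the Hausdorff representing measure of the completely monotone sequence $\{\gamma_n\}_n$. Push $\nu$ forward under $y \mapsto b-(b-a)y$; the result is $\mu$, and uniqueness again follows from \cref{lemm:Tinjective}.

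There is no real obstacle here. The only point needing care is to keep existence (which is the definition of admissibility) separate from uniqueness (which is injectivity).
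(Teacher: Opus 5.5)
Your proof is correct and is essentially the paper's argument. The paper notes that $S$ is a bijection (indeed a homeomorphism, \cref{thm:Thomeo}) from $\cPab$ onto $\cAab$, so each admissible sequence has a unique preimage; that bijectivity rests on exactly your two ingredients, namely surjectivity from the definition $\cAab = S(\cPab)$ and injectivity from \cref{lemm:Tinjective}, and citing \cref{lemm:Tinjective} directly is slightly more economical since the topological part of \cref{thm:Thomeo} is not needed here.
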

\begin{proof}
    Since the primitive sequence map $S$ is a homeomorphism from $\cPab$ to $\cAab$ by \cref{thm:Thomeo}, every $\epsseq \in \cAab$ has a unique preimage under $S$.
\end{proof}

Finally, we identify the extreme points of $\cAab$. Informally, these correspond to ``irreducible'' primitive sequences that cannot be expressed as (non-trivial) convex combinations of other admissible sequences. In fact, the only such primitive sequences are those generated by point masses:

\begin{prop}\label{prop:extremepoints}
    The extreme points of $\cAab$ are precisely the sequences arising from point masses. That is, $\epsseq \in \mathrm{ext}(\cAab)$ if and only if there exists $x \in [a,b]$ such that $\eps_n = (b-x)^n/n!$ for all $n \geq 0$.
\end{prop}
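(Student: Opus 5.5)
The plan is to transfer the problem from $\cAab$ to $\cPab$ through the affine bijection $S$. Since $S$ is affine by \cref{lemm:Taffine}, injective by \cref{lemm:Tinjective}, and surjective onto $\cAab$ by definition, a point $\epsseq=S(\mu)$ is extreme in $\cAab$ if and only if $\mu$ is extreme in $\cPab$. Indeed, a nontrivial decomposition $S(\mu)=\lambda S(\mu_1)+(1-\lambda)S(\mu_2)$ with $S(\mu_1)\neq S(\mu_2)$ gives $S(\mu)=S(\lambda\mu_1+(1-\lambda)\mu_2)$. Injectivity then yields $\mu=\lambda\mu_1+(1-\lambda)\mu_2$, and $\mu_1\neq\mu_2$ because $S(\mu_1)\ne S(\mu_2)$. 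The converse direction is the same argument read backwards: apply $S$ to a decomposition of $\mu$, and note that injectivity keeps the images distinct. So it suffices to show that $\mathrm{ext}(\cPab)=\{\delta_x: x\in[a,b]\}$. Then \cref{thm:representation} converts $\delta_x$ into the sequence $\eps_n=(b-x)^n/n!$.

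For the inclusion of Dirac masses among the extreme points, I will suppose $\delta_x=\lambda\mu_1+(1-\lambda)\mu_2$ with $\lambda\in(0,1)$. Evaluating at the Borel set $[a,b]\setminus\{x\}$ gives $0=\lambda\mu_1([a,b]\setminus\{x\})+(1-\lambda)\mu_2([a,b]\setminus\{x\})$. Both terms are nonnegative, so both vanish, which forces $\mu_1=\mu_2=\delta_x$. Alternatively, I could stay in sequence coordinates: use \cref{cor:seqboundsequality}-type reasoning with the normalized variable $Y=(b-X)/(b-a)$ and note that $\mathrm{Var}(Y)=\gamma_2-\gamma_1^2$. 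This expression is a convex function along the segment that is linear in $\gamma_2$, but the measure argument is cleaner.

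For the converse, I will take $\mu$ not a point mass. Then some $c\in(a,b)$ satisfies $p:=\mu([a,c])\in(0,1)$. To see this, let $F_\mu$ be the cdf; if $F_\mu$ took only the values $0$ and $1$ on $[a,b)$, then $\mu$ would be a Dirac mass at $\inf\{t:F_\mu(t)=1\}$. The decomposition is $\mu_1:=\mu(\,\cdot\cap[a,c])/p$ and $\mu_2:=\mu(\,\cdot\cap(c,b])/(1-p)$. Both lie in $\cPab$, they are distinct because they have disjoint supports, and $\mu=p\mu_1+(1-p)\mu_2$. Hence $\mu\notin\mathrm{ext}(\cPab)$, and therefore $S(\mu)\notin\mathrm{ext}(\cAab)$.

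The only step requiring care is the existence of the split point $c$ for a non-Dirac $\mu$. I will handle it with the cdf argument above: take $x^*:=\inf\{t: F_\mu(t)=1\}\in[a,b]$. Right-continuity gives $F_\mu(x^*)=1$. If $F_\mu(t)=0$ for every $t<x^*$, then $\mu=\delta_{x^*}$. Otherwise some $t<x^*$ has $F_\mu(t)\in(0,1)$, since $F_\mu(t)<1$ there by the definition of $x^*$. Taking $c=t$ gives the split, and the value is well defined since $t\ge a$ whenever $F_\mu(t)>0$. Everything else is routine.
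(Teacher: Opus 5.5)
Your proof is correct and follows essentially the same route as the paper. Affinity and injectivity of $S$ transfer (non-)extremality between $\cAab$ and $\cPab$. A decomposition of $\delta_x$ is ruled out by evaluating on $\{x\}$ (or its complement). A non-Dirac $\mu$ is split into two conditional laws. Your cdf construction of the splitting set $[a,c]$ (which actually yields $c\in[a,b)$, all you need) simply makes explicit the existence of a Borel $B$ with $0<\mu(B)<1$ that the paper takes for granted.

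One small slip in your unused aside: $\gamma_2-\gamma_1^2$ is concave, not convex, in $(\gamma_1,\gamma_2)$, and concavity is exactly what would make that alternative argument go through.
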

\begin{proof}
    First note that if $X = x \in [a,b]$ a.s., then $\eps_n = \E[(b-x)^n]/n! = (b-x)^n/n!$ by \cref{thm:representation}. We claim this sequence is extreme. Indeed, suppose it can be written as a convex combination of two sequences in $\cAab$; that is,
    \begin{equation*}
        \frac{(b-x)^n}{n!} = \lambda \eps\anti{1}_n + (1-\lambda) \eps\anti{2}_n, \qquad n \geq 0
    \end{equation*}
    for some $\{\eps\anti{1}\}_n, \{\eps\anti{2}\}_n \in \cAab$ and $\lambda \in (0,1)$. Since these sequences are admissible for $[a,b]$, there exist $\mu_1, \mu_2 \in \cPab$ such that $\eps\anti{i}_n = \eps_n(\mu_i)$ for $i = 1,2$. Because the primitive sequence map $S$ is affine by \cref{lemm:Taffine}, we have
    \begin{equation*}
        S(\delta_x) = \epsseq
        = \left\{ \lambda \eps\anti{1}_n + (1-\lambda) \eps\anti{2}_n \right\}_n
        = \lambda \{\eps\anti{1}_n\}_n + (1-\lambda) \{\eps\anti{2}_n\}_n
        = S(\lambda \mu_1 + (1-\lambda)\mu_2).
    \end{equation*}
    By \cref{lemm:Tinjective}, it follows that $\delta_x = \lambda \mu_1 + (1-\lambda)\mu_2$. Evaluating both sides at $\{x\}$ gives
    \begin{equation*}
        1 = \lambda \mu_1(\{x\}) + (1-\lambda) \mu_2(\{x\})
    \end{equation*}
    and hence $\mu_1(\{x\}) = \mu_2(\{x\}) = 1$, whence $\mu_1 = \mu_2 = \delta_x$. Therefore $S(\mu_1) = S(\mu_2) = S(\delta_x)$; that is, $\{\eps\anti{1}\}_n = \{\eps\anti{2}\}_n = \epsseq$, so $\epsseq \in \mathrm{ext}(\cAab)$.

    Conversely, suppose that $\epsseq \in \cAab$ is generated by a measure $\mu \in \cPab$ that is \emph{not} a point mass. Then there exists $B \in \cB_{[a,b]}$ with $0 < \mu(B) < 1$. Define the conditional laws $\mu_1 := \mu(\cdot \mid B)$ and $\mu_2 := \mu(\cdot \mid B^c)$ so that $\mu = \lambda_B \mu_1 + (1-\lambda_B) \mu_2$ with $\mu_1 \neq \mu_2$, where $\lambda_B \in (0,1)$. Since $S$ is affine by \cref{lemm:Taffine}, we have $S(\mu) = \lambda_B S(\mu_1) + (1-\lambda_B) S(\mu_2)$, and it follows that $S(\mu_1) \neq S(\mu_2)$ (otherwise, we would have $\mu_1 = \mu_2$ by \cref{lemm:Tinjective}, a contradiction). Hence $S(\mu)$ is a non-trivial convex combination of $S(\mu_1)$ and $S(\mu_2)$. That is, $\{\eps_n\}$ is a non-trivial convex combination of two distinct points $S(\mu_1), S(\mu_2) \in \cAab$, and therefore $\epsseq \not\in \mathrm{ext}(\cAab)$.    
\end{proof}

\section{Finite truncation and applications}\label{sec:finitetruncation}

In \cref{sec:admissiblesequences}, we showed that probability measures in $\cPab$ are uniquely determined by their full primitive sequences. However, any practical use of a primitive sequence would naturally involve only finitely many of its terms. This leads naturally to considering optimization problems over the class of probability measures whose first $m$ primitive terms are fixed --- a specialization of the classical truncated moment problem (see, for example, \cite{curtoRecursivenessPositivityTruncated1991, schmudgenMomentProblem2017, kreinMarkovMomentProblem1977}) to the coordinates defined by the primitive sequence introduced here. Our aim in this section is to derive the corresponding extremal principle with finitely supported measures in these coordinates, and to present two concrete applications.

It will be convenient to define the set-valued map $\cF_m:\R^m \to 2^{\cPab}$ by
\[
\cF_m(c_{1:m})
:=
\left\{
\mu \in \cPab :
\frac{1}{j!}\int_{[a,b]} (b-t)^j\,\mu(\dif t)=c_j
\text{ for } j=1,\ldots,m
\right\}.
\]
That is, $\cF_m(c_{1:m})$ is the set of probability measures in $\cPab$ whose first $m$ primitive coordinates equal $c_1,\ldots,c_m$.

\begin{thm}\label{thm:finitetruncation}
    Suppose that $c_1, \ldots, c_m$ are the first $m \geq 1$ terms of an admissible sequence for $[a,b]$, and let $L:\cPab \to \R$ be a functional that is affine on $\cFmeps$. If $L$ is also upper semicontinuous on $\cFmeps$, then 
    \[
        \sup_{\mu \in \cFmeps} L(\mu)
    \]
    is attained by a distribution supported on at most $m+1$ points in $[a,b]$, while if $L$ is lower semicontinuous on $\cFmeps$, then 
    \[
        \inf_{\mu \in \cFmeps} L(\mu)
    \]
    is attained by a distribution supported on at most $m+1$ points in $[a,b]$.
\end{thm}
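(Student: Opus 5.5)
Since $c_{1:m}$ are the first $m$ terms of an admissible sequence, $\cFmeps$ is nonempty. The plan is to show that $\cFmeps$ is a nonempty, convex, weakly compact set whose extreme points are exactly the measures in it with at most $m+1$ atoms, and then apply Bauer's maximum principle (or Krein--Milman) to the semicontinuous affine functional $L$.

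\emph{Convexity and compactness.} Convexity is immediate from \cref{lemm:Taffine}, since each constraint $\mu \mapsto \eps_j(\mu)$ is affine. For compactness, each map $\mu \mapsto \eps_j(\mu)$ is weakly continuous by \cref{lemm:Taffine}. Hence $\cFmeps$ is the intersection of $m$ closed sets in $\cPab$, which is compact by \cref{lem:Pabcompact}. So $\cFmeps$ is compact in the weak topology.

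\emph{Attainment at an extreme point.} The weak topology on $\cPab$ is the restriction of the weak-* topology of $C([a,b])^*$, which is locally convex Hausdorff. By Bauer's maximum principle, an upper semicontinuous convex (in particular affine) functional on a nonempty compact convex set attains its maximum at an extreme point. The route I would actually write out is more elementary:
\begin{itemize}
\item[(i)] The u.s.c.\ function $L$ attains its sup on the compact set $\cFmeps$, and the set of maximizers $M$ is closed, hence compact.
\item[(ii)] Because $L$ is affine, $M$ is a face of $\cFmeps$.
\item[(iii)] By Krein--Milman, $M$ has an extreme point, and an extreme point of a face is extreme in $\cFmeps$.
\end{itemize}
The infimum case follows by applying the same argument to $-L$.

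\emph{Extreme points have at most $m+1$ atoms.} This is the main step. I would argue by contradiction. Suppose $\mu \in \cFmeps$ is not supported on at most $m+1$ points. Then we can find $m+2$ disjoint Borel sets $B_0,\ldots,B_{m+1}$, each of positive $\mu$-measure. (If the support is infinite or has at least $m+2$ points, split $[a,b]$ into intervals each carrying positive mass.)

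Now look for a perturbation of the form $\mu_\pm = \mu(1 \pm t h)$, with $h = \sum_{i} \alpha_i \Ind{B_i}$ and $\alpha \neq 0$. We need the $m+1$ linear conditions
\[
\int h \, \dif\mu = 0, \qquad \int (b-x)^j h \, \dif\mu = 0 \quad (j=1,\ldots,m).
\]
These are $m+1$ homogeneous equations in $m+2$ unknowns $\alpha_i$, so a nonzero solution exists. Since $h$ is bounded, for small $t>0$ both $\mu_\pm$ are probability measures in $\cFmeps$. They are distinct because $h \neq 0$ $\mu$-a.e.\ on some $B_i$ with positive mass, and $\mu = \tfrac12(\mu_+ + \mu_-)$. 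So $\mu$ is not extreme, a contradiction.

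Combining the three steps, the extremizer is supported on at most $m+1$ points. The delicate points are the functional-analytic justification of Krein--Milman in the weak topology and the existence of the $m+2$ disjoint positive-mass sets. Both are standard, but they should be stated carefully.
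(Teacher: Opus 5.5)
Your proposal is correct and follows essentially the same route as the paper: compactness of $\cFmeps$, then the face-of-maximizers plus Krein--Milman argument to obtain an extreme maximizer, then a perturbation on $m+2$ disjoint positive-mass sets that solves $m+1$ homogeneous linear conditions. Your density perturbation $\mu(1\pm th)$ is exactly the paper's reweighting of the conditional laws $\mu^*(\cdot\mid A_i)$, with your coefficients equal to the paper's $\alpha_i$ divided by $\mu^*(A_i)$, and your two added remarks make explicit points the paper leaves implicit: that Krein--Milman applies via the weak-* topology of $C([a,b])^*$, and how the $m+2$ sets arise from the support.
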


\begin{rem}
    \cite{winklerExtremePointsMoment1988} develops general results on extreme points of convex sets of probability measures defined by finitely many integral constraints of the form $\int_{\cX} f_j(x) \mu(\dif x) = c_j$, where $f_j$ is real and measurable on a measurable space $\cX$. Under relatively mild conditions, the extreme points of these sets are finitely supported, and the support size is bounded in terms of the number of constraints \citep[][Theorem~2.1]{winklerExtremePointsMoment1988}. A complementary and classical result is Tchakaloff's theorem \citep[][Theorem~1.24]{tchakaloffFormulesCubatureMecanique1957,schmudgenMomentProblem2017}, which ensures the existence of a finitely supported measure matching a given positive linear functional on a finite-dimensional function space. Theorem~\ref{thm:finitetruncation} may be viewed as a specialization of these general results to the functions $f_j(x) = (b-x)^j/j!$, $j=1,\dots,m$, together with the observation that semicontinuous affine functions attain their extrema at extreme points under this set of constraints. However, in our setting, the following self-contained proof reduces the key step to the elementary argument underlying Carath\'eodory's theorem that any $m+2$ points in $\R^m$ are affinely dependent.
\end{rem}

\begin{proof}[Proof of \cref{thm:finitetruncation}.]
    We treat the case that $L$ is upper semicontinuous; the proof is analogous when $L$ is lower semicontinuous. To begin, equip $\cPab$ with the weak topology. $\cPab$ is compact by \cref{lem:Pabcompact}, and because for each $j \geq 0$ the function $t \mapsto (b-t)^j/j!$ is bounded and continuous on $[a,b]$, the map
    \begin{equation*}
        \mu \mapsto \frac{1}{j!}\int (b-t)^j \mu(\dif t)
    \end{equation*}
    is continuous. Therefore $\cFmeps$ is closed in $\cPab$ and hence compact, so $\sup_{\mu \in \cFmeps} L(\mu)$ is attained by at least one $\tilde{\mu} \in \cFmeps$. Let $K := \{\mu \in \cFmeps : L(\mu) = L(\tilde{\mu})\}$ and observe that $K$ is non-empty and convex. Moreover, because $L$ is upper semicontinuous, $K = L^{-1}([L(\tilde{\mu}),\infty))$ is a closed subset of the compact set $\cFmeps$ and hence compact itself. By the Krein--Milman theorem \citep[][Theorem 3.22]{rudinFunctionalAnalysis1973}, $K$ contains an extreme point $\mu^*$. Moreover, if $\mu^* = \lambda \mu_1 + (1-\lambda) \mu_2 \in K$ with $\lambda \in (0,1)$ and $\mu_1, \mu_2 \in \cFmeps$, then affineness of $L$ implies $L(\tilde{\mu}) = \lambda L(\mu_1) + (1-\lambda)L(\mu_2)$, whence $L(\mu_1) = L(\mu_2) = L(\tilde{\mu})$, and so $\mu_1, \mu_2 \in K$. Thus $K$ is a face of $\cFmeps$, and therefore $\mathrm{ext}(K) \subseteq \mathrm{ext}(\cFmeps)$. Hence $\mu^* \in \mathrm{ext}(\cFmeps)$, and it remains to show that $\mu^*$ is supported on at most $m+1$ points.

    To this end, suppose for a contradiction that there exist $m+2$ disjoint $A_1, \ldots, A_{m+2} \in \cB_{[a,b]}$ with $\mu^*(A_i) > 0$ for $i=1, \ldots, m+2$. For each such $i$, define the conditional law $\mu_i := \mu^*(\cdot \mid A_i) \in \cPab$, the weight $\lambda_i := \mu^*(A_i) \in (0,1]$, and the vector
    \begin{equation*}
        \bv_i := \left(\int (b-t) \mu_i(\dif t), \frac{1}{2!}\int (b-t)^2 \mu_i(\dif t), \ldots, \frac{1}{m!}\int (b-t)^m \mu_i(\dif t) \right) \in \R^m.
    \end{equation*}
    Since $\R^m$ is $m$-dimensional, the $m+2$ vectors $\bv_1,\ldots,\bv_{m+2}$ are affinely dependent; that is, there exist coefficients $\alpha_1,\ldots,\alpha_{m+2}\in\R$, not all zero, such that
    \begin{equation}\label{eq:affinelyindependent}
        \sum_{i=1}^{m+2} \alpha_i = 0
        \qquad\text{and}\qquad
        \sum_{i=1}^{m+2} \alpha_i v_i = 0.
    \end{equation}
    Choose $\delta>0$ small enough that
    \[
    \min\{\lambda_i+\delta\alpha_i,\lambda_i-\delta\alpha_i\}\ge 0
    \qquad\text{for each } i,
    \]
    and define
    \[
    \mu^+
    :=
    \mu^*\big|_{[a,b]\setminus A}
    +
    \sum_{i=1}^{m+2}(\lambda_i+\delta\alpha_i)\mu_i,
    \qquad
    \mu^-
    :=
    \mu^*\big|_{[a,b]\setminus A}
    +
    \sum_{i=1}^{m+2}(\lambda_i-\delta\alpha_i)\mu_i,
    \]
    where $A=\bigcup_{i=1}^{m+2} A_i$. We have that
    \[
        \mu^\pm([a,b]) = \mu^*([a,b] \setminus A) + \sum_{i=1}^{m+2} (\lambda_i \pm \delta \alpha_i) = \left(1 - \sum_{i=1}^{m+2} \lambda_i \right) + \sum_{i=1}^{m+2} \lambda_i \pm \delta \sum_{i=1}^{m+2} \alpha_i = 1
    \]
    due to the first constraint in \cref{eq:affinelyindependent}, so $\mu^+,\mu^- \in \cPab$ with $\mu^+ \neq \mu^-$. Moreover, for any $B \in \cB_{[a,b]}$, 
    \begin{align*}
        \frac{\mu^+(B) + \mu^-(B)}{2}= \mu^*(B \setminus A) + \sum_{i=1}^{m+2} \lambda_i \mu_i(B)
        = \mu^*(B \setminus A) + \mu^*(A \cap B)
        = \mu^*(B),
    \end{align*}
    so $\mu^* = (\mu^+ + \mu^-)/2$. Now, for each $j = 1, \ldots, m$, we have
    \begin{align*}
        \int (b-t)^j \mu^\pm(\dif t) &=  \int_{[a,b] \setminus A} (b-t)^j \mu^*(\dif t) + \sum_{i=1}^{m+2} (\lambda_i \pm \delta \alpha_i) \int (b-t)^j \mu_i(\dif t)\\
        &= \int_{[a,b] \setminus A} (b-t)^j \mu^*(\dif t) +\sum_{i=1}^{m+2} \int_{A_i} (b-t)^j \mu^*(\dif t) \pm  \delta \sum_{i=1}^{m+2} \alpha_i\int (b-t)^j \mu_i(\dif t)\\
        &= \int_{[a,b] \setminus A} (b-t)^j \mu^*(\dif t) +\sum_{i=1}^{m+2} \int_{A_i} (b-t)^j \mu^*(\dif t)\\
        &= \int (b-t)^j \mu^*(\dif t),
    \end{align*}
    where the third equality holds due to the second constraint in \cref{eq:affinelyindependent}. Thus
    \begin{equation*}
        \frac{1}{j!}\int (b-t)^j \mu^\pm(\dif t) = \frac{1}{j!}\int (b-t)^j \mu^*(\dif t),
    \end{equation*}
    and it follows that $\mu^+, \mu^- \in \cFmeps$. Therefore, $\mu^*$ is a non-trivial convex combination of two different measures in $\cFmeps$, which contradicts the fact that $\mu^* \in \mathrm{ext}(\cFmeps)$. Thus, $\mu^*$ is supported on at most $m+1$ points in $[a,b]$.
\end{proof}

\cref{thm:finitetruncation} expresses a familiar phenomenon from truncated moment problems in primitive coordinates. Under finitely many primitive constraints, extremizers of affine semicontinuous functionals may be taken to be finitely supported, with support size at most $m+1$. We now apply \cref{thm:finitetruncation} in two related settings.

\begin{coro}\label{cor:momentmatching}
    Suppose that $c_1, \ldots, c_m$ are the first $m \geq 1$ terms of an admissible sequence for $[a,b]$. Then for every integer $k \geq 0$, 
    \begin{equation*}
        M_m^+(k; c_{1:m}) := \sup_{\mu \in \cFmeps} \int_{[a,b]} \frac{(b-t)^k}{k!} \mu(\dif t)
        \end{equation*}
    and
    \begin{equation*}
        M_m^-(k; c_{1:m}) := \inf_{\mu \in \cFmeps} \int_{[a,b]} \frac{(b-t)^k}{k!}  \mu(\dif t) 
    \end{equation*}
    are attained by $(m+1)$-atomic measures. Equivalently, if $X$ is any random variable on $[a,b]$ with given moments $\E[X^k]$ for $k=1,\ldots,m$, then sharp bounds on $\E[p(X)]$ for any degree-$r$ polynomial $p$ are attained by discrete random variables supported on at most $m+1$ points.
\end{coro}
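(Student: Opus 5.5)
The plan is to apply \cref{thm:finitetruncation} directly with the functional $L_k(\mu) := \int_{[a,b]} (b-t)^k/k! \, \mu(\dif t)$. I will check three things in order. First, $\cFmeps$ is nonempty because $c_{1:m}$ are the first $m$ terms of an admissible sequence. Second, $L_k$ is affine on $\cPab$, hence on $\cFmeps$; this follows from linearity of the integral exactly as in \cref{lemm:Taffine}, and indeed $L_k = \eps_k$ as a function of $\mu$. Third, $L_k$ is continuous in the weak topology, since $t \mapsto (b-t)^k/k!$ is bounded and continuous on $[a,b]$. Continuity gives both upper and lower semicontinuity, so both halves of \cref{thm:finitetruncation} apply. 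Each of $M_m^+(k;c_{1:m})$ and $M_m^-(k;c_{1:m})$ is then attained by a measure supported on at most $m+1$ points. I read ``$(m+1)$-atomic'' as ``at most $m+1$ atoms'', which is what the theorem delivers.

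For the ``equivalently'' clause I will use \cref{cor:momentstoeps}. Fixing $\E[X^1], \ldots, \E[X^m]$ is the same as fixing $\eps_1, \ldots, \eps_m$, because the linear change of coordinates between the two is triangular and invertible. Hence the set of laws with prescribed raw moments up to order $m$ is exactly some $\cFmeps$. Here $c_{1:m}$ is admissible provided the moment constraint is feasible, i.e.\ provided $X$ exists as hypothesized.

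For a polynomial $p$ of any degree $r$, the functional $\mu \mapsto \int p \, \dif\mu$ is again affine and weakly continuous, since $p$ is bounded and continuous on $[a,b]$. So \cref{thm:finitetruncation} applies verbatim and the sharp bounds are attained at laws with at most $m+1$ atoms. One can alternatively expand $p$ in the basis $\{(b-t)^j/j!\}_{j=0}^r$ and write $L$ as a linear combination of the $L_j$, but this is unnecessary, since only affineness and continuity are used.

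I expect no real obstacle here. The only points needing care are these: ``sharp'' means the supremum and infimum are attained; this uses compactness of $\cFmeps$ (already established inside the proof of \cref{thm:finitetruncation}) and nonemptiness from the admissibility hypothesis. When $k \le m$ or $r \le m$ the functional is constant on $\cFmeps$ (or determined by the constraints), so the statement holds trivially there. I will note this case but not treat it separately.
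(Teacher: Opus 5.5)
Your proposal is correct and is essentially the paper's proof: $\mu \mapsto \int_{[a,b]} (b-t)^k/k!\,\mu(\dif t)$ is affine and weakly continuous because the integrand is bounded and continuous, so it is both upper and lower semicontinuous, and \cref{thm:finitetruncation} then gives extremizers supported on at most $m+1$ points. You also justify the ``equivalently'' clause, via the invertible triangular change of coordinates in \cref{cor:momentstoeps} and by applying the theorem directly to $\mu \mapsto \int p \, \dif\mu$; the paper leaves that step implicit.
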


\begin{rem}
    For $k \in \{0,\ldots,m\}$, the quantity $\int_{[a,b]} (b-t)^k/k! \mu(\dif t)$ is fixed by definition of $\cFmeps$, so in this case $M_m^+(k; c_{1:m}) = M_m^-(k; c_{1:m}) = c_k$.
\end{rem}

\begin{proof}[Proof of \cref{cor:momentmatching}.]
    Define $M_k:\cPab \to \R$ by 
    \[
        M_k(\mu) := \int_{[a,b]} \frac{(b-t)^k}{k!}  \mu(\dif t).
    \]
    Because $t \mapsto (b-t)^k/k!$ is bounded and continuous on $[a,b]$, the functional $M_k$ is continuous by the Portmanteau lemma. In particular, $M_k$ is both upper and lower semicontinuous, so by \cref{thm:finitetruncation}, $M_m^+(k; c_{1:m}) = \sup_{\mu \in \cFmeps} M_k(\mu)$ and $M_m^-(k; c_{1:m}) = \inf_{\mu \in \cFmeps} M_k(\mu)$ are each attained by 
    measures in $\cPab$ supported on at most $m+1$ points.
\end{proof}

Our next application involves bounding cdfs:

\begin{coro}\label{cor:cdfbounding}
    Suppose that $c_1, \ldots, c_m$ are the first $m \geq 1$ terms of an admissible sequence for $[a,b]$. Then for every $x_0 \in [a,b]$,
    \begin{equation*}
    \bar{F}_m(x_0; c_{1:m}) := \sup_{\mu \in \cFmeps} \mu([a,x_0]) 
    \quad \text{and} \quad 
    \underline{F}_m(x_0; c_{1:m}) := \inf_{\mu \in \cFmeps} \mu([a,x_0))
\end{equation*}
are attained by $(m+1)$-atomic measures. Equivalently, if $X$ is any random variable on $[a,b]$ with given moments $\E[X^k]$ for $k=1,\ldots,m$, then sharp bounds on $F_X(x_0) = \Prob(X \leq x_0)$ and on $F_X(x_0-) = \Prob(X < x_0)$ are attained by discrete random variables supported on at most $m+1$ points.
\end{coro}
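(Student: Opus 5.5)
The plan is to apply \cref{thm:finitetruncation} to two functionals on $\cPab$: $U(\mu) := \mu([a,x_0])$ for the supremum and $D(\mu) := \mu([a,x_0))$ for the infimum. Both are affine on all of $\cPab$, and hence on $\cFmeps$. Indeed, evaluation of a measure at a fixed Borel set is linear in the measure, so $(\lambda\mu_1 + (1-\lambda)\mu_2)(B) = \lambda\mu_1(B) + (1-\lambda)\mu_2(B)$. It therefore remains to check the required semicontinuity with respect to the weak topology, and then invoke the theorem.

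For semicontinuity I would use the Portmanteau lemma in its closed/open set form. The set $[a,x_0]$ is closed in $[a,b]$, so if $\mu_k \wc \mu$ then $\limsup_k \mu_k([a,x_0]) \le \mu([a,x_0])$. Thus $U$ is upper semicontinuous, which is exactly what is needed for the supremum part of \cref{thm:finitetruncation}. The set $[a,x_0)$ is relatively open in $[a,b]$: it is empty when $x_0=a$, and equals $[a,b]\cap(-\infty,x_0)$ otherwise. Hence $\liminf_k \mu_k([a,x_0)) \ge \mu([a,x_0))$, so $D$ is lower semicontinuous, as needed for the infimum part. The weak topology on $\cPab$ is metrizable (for instance, by the L\'evy--Prokhorov metric), so sequential semicontinuity is equivalent to topological semicontinuity, i.e.\ to closedness of the superlevel sets $U^{-1}([c,\infty))$ and the corresponding sublevel sets of $D$. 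This is the form used in the proof of \cref{thm:finitetruncation}.

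With these facts, \cref{thm:finitetruncation} yields that $\bar F_m(x_0;c_{1:m})$ and $\underline F_m(x_0;c_{1:m})$ are attained by measures supported on at most $m+1$ points. The ``equivalently'' clause then follows from \cref{cor:momentstoeps}. The map from $(\E[X],\dots,\E[X^m])$ to $(\eps_1,\dots,\eps_m)$ is an invertible triangular linear change of coordinates, so fixing the first $m$ moments is the same as fixing $c_{1:m}$. Moreover, $\mu([a,x_0]) = F_X(x_0)$ and $\mu([a,x_0)) = F_X(x_0-)$ when $X\sim\mu\in\cPab$.

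The main subtlety is pairing each functional with the correct semicontinuity. For example, $\mu([a,x_0])$ is not lower semicontinuous: point masses $\delta_{x_0+1/k}$ converge weakly to $\delta_{x_0}$, yet give this set mass $0$ while the limit gives it mass $1$. This failure is why the statement bounds $F_X(x_0)$ from above and $F_X(x_0-)$ from below rather than the same quantity from both sides. I would remark on this asymmetry briefly in the proof. I would also note the trivial endpoint cases: for $x_0=b$ one has $U\equiv1$, and for $x_0=a$ one has $D\equiv0$, where attainment is immediate.
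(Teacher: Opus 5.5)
Your proposal is correct and matches the paper's proof: upper semicontinuity of $\mu \mapsto \mu([a,x_0])$ and lower semicontinuity of $\mu \mapsto \mu([a,x_0))$ follow from the closed-set and open-set forms of the Portmanteau lemma, and \cref{thm:finitetruncation} then applies. Your remark that metrizability of the weak topology upgrades sequential semicontinuity to the closed-level-set form used in that theorem's proof fills a small step the paper leaves implicit.
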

\begin{proof}
We treat $\bar{F}_m(x_0; c_{1:m})$ first. Define $\bar G_{x_0}:\cPab \to \R$ by $\bar G_{x_0}(\mu) := \mu([a,x_0])$. Since $[a,x_0]$ is relatively closed in $[a,b]$, for $\mu_k \wc \mu$ we have
\[
    \limsup_{k \to \infty} \mu_k([a,x_0]) 
    = \limsup_{k \to \infty} \Prob(X_k \in [a,x_0])
    \leq \Prob(X \in [a,x_0]) 
    = \mu([a,x_0]),
\]
where $X_k \sim \mu_k$, $X \sim \mu$, and the inequality is due to the Portmanteau lemma. Thus the functional $\bar G_{x_0}$ is upper semicontinuous, and by \cref{thm:finitetruncation}, $\bar{F}_m(x_0; c_{1:m}) = \sup_{\mu \in \cFmeps} \bar G_{x_0}(\mu)$ is attained by a measure in $\cPab$ supported on at most $m+1$ points.

Similarly, define $\underline G_{x_0}:\cPab \to \R$ by $\underline G_{x_0}(\mu) := \mu([a,x_0))$. Since $[a,x_0)$ is relatively open in $[a,b]$, for $\mu_k \wc \mu$ we have
\[
    \liminf_{k \to \infty} \mu_k([a,x_0))
    = \liminf_{k \to \infty} \Prob(X_k \in [a,x_0))
    \geq \Prob(X \in [a,x_0))
    = \mu([a,x_0))
\]
again by the Portmanteau lemma. Therefore $\underline G_{x_0}$ is lower semicontinuous and $\underline F_m(x_0;c_{1:m}) = \inf_{\mu \in \cFmeps} \underline G_{x_0}(\mu)$ is again attained by some $\mu^\star\in \cFmeps$, which is again supported on at most $m+1$ points by \cref{thm:finitetruncation}.
\end{proof}

Since the upper and lower bounds in \cref{cor:cdfbounding} are ultimately functions of the first $m$ moments of $X$ (by \cref{cor:momentstoeps}), it is reasonable to check whether they are consistent, in that they should collapse to $F_X(x_0)$ and $F_X(x_0-)$ respectively as $m \to \infty$. \cref{prop:CDFboundsconverge} confirms that this is indeed so:

\begin{prop}\label{prop:CDFboundsconverge}
Under the setup of \cref{cor:cdfbounding}, we have 
\begin{equation*}
    \bar{F}_m(x_0; c_{1:m}) \searrow F(x_0) \quad \text{and} \quad \underline{F}_m(x_0; c_{1:m}) \nearrow F(x_0-)
\end{equation*} 
as $m \to \infty$ for any $x_0 \in [a,b]$. In particular, if $F$ is continuous at $x_0$, then both sequences converge to $F(x_0)$.
\end{prop}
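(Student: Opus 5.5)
Here $c_{1:m}$ are the first $m$ terms of the primitive sequence of a fixed $\mu_0 \in \cPab$ with cdf $F$, so $\cF_1(c_{1}) \supseteq \cF_2(c_{1:2}) \supseteq \cdots$ and $\mu_0 \in \cFmeps$ for every $m$. The plan has three steps: establish monotonicity and the trivial bounds, show that the intersection of the constraint sets is $\{\mu_0\}$, and then pass to the limit along maximizers using compactness and the homeomorphism of \cref{thm:Thomeo}.

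\textbf{Monotonicity and trivial bounds.} Because the constraint sets shrink as $m$ grows, $\bar F_m$ is non-increasing and $\underline F_m$ is non-decreasing in $m$. Because $\mu_0$ lies in every constraint set,
\[
\bar F_m \ge \mu_0([a,x_0]) = F(x_0), \qquad \underline F_m \le \mu_0([a,x_0)) = F(x_0-).
\]
Hence both limits exist, with $\lim_m \bar F_m \ge F(x_0)$ and $\lim_m \underline F_m \le F(x_0-)$. It remains to prove the reverse inequalities.

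\textbf{Upper bound.} By \cref{cor:cdfbounding}, for each $m$ there is $\mu_m \in \cFmeps$ with $\mu_m([a,x_0]) = \bar F_m$.
\begin{itemize}
  \item For $m \ge n$, the constraint gives $\eps_n(\mu_m) = c_n = \eps_n(\mu_0)$. Also $\eps_0 = 1$ for every measure. So $\eps_n(\mu_m) \to \eps_n(\mu_0)$ for every $n$.
  \item By \cref{thm:Thomeo}, this gives $\mu_m \wc \mu_0$.
  \item Since $[a,x_0]$ is closed, the Portmanteau lemma gives
  \[
  \lim_m \bar F_m = \limsup_m \mu_m([a,x_0]) \le \mu_0([a,x_0]) = F(x_0).
  \]
\end{itemize}

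\textbf{Lower bound.} The same argument applies to the minimizers of $\mu([a,x_0))$ from \cref{cor:cdfbounding}. Since $[a,x_0)$ is relatively open in $[a,b]$, the Portmanteau lemma gives $\liminf_m \underline F_m \ge F(x_0-)$. If $F$ is continuous at $x_0$, then $F(x_0-) = F(x_0)$, which gives the final claim.

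The one step needing care is the passage from the constraints to $\mu_m \wc \mu_0$. Membership in $\cFmeps$ fixes only the first $m$ coordinates, but for each fixed $n$ the coordinate $\eps_n(\mu_m)$ equals $c_n$ once $m \ge n$. This is exactly coordinatewise (product-topology) convergence, so \cref{thm:Thomeo} applies. Alternatively, compactness (\cref{lem:Pabcompact}) and the injectivity in \cref{lemm:Tinjective} show that every subsequential limit equals $\mu_0$.
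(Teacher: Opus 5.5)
Your proof is correct and matches the paper's argument: the nested constraint sets all contain $\mu_0$, which gives monotonicity and the trivial bounds; the extremizers then converge weakly to $\mu_0$, and the Portmanteau lemma applied to $[a,x_0]$ and $[a,x_0)$ gives the reverse inequalities. The only difference is that you apply the convergence equivalence of \cref{thm:Thomeo} directly to the whole sequence of maximizers, whereas the paper first extracts a weakly convergent subsequence via \cref{lem:Pabcompact}, identifies its limit by matching primitive coordinates, and then uses monotonicity to pass back to the full sequence; this is exactly the alternative you mention at the end.
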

\begin{proof}
    Since $\cF_{m+1}(c_{1:(m+1)}) \subset \cF_{m}(c_{1:m})$, the sequence $\{\bar{F}_m(x_0; c_{1:m})\}_m$ is nonincreasing and the sequence $\{\underline{F}_m(x_0; c_{1:m})\}_m$ is nondecreasing. Moreover, $\mu \in \cF_{m}(c_{1:m})$ for all $m \geq 1$, and so
    \begin{equation}\label{eq:Fseqineq1}
        \underline{F}_m(x_0; c_{1:m})
        \leq \mu([a,x_0)) = F(x_0-) 
        \leq F(x_0) = \mu([a,x_0]) \leq \bar{F}_m(x_0; c_{1:m}), \qquad m \geq 1.
    \end{equation}
    Hence, both sequences have limits by the monotone convergence theorem for real sequences.
    
    We consider $\bar{F}_m(x_0; c_{1:m})$. Equip $\cPab$ with the weak topology. By \cref{cor:cdfbounding}, for each $m$ there exists $\bar{\mu}_m \in \cF_{m}(c_{1:m})$ such that $\bar{\mu}_m([a,x_0]) = \bar{F}_m(x_0; c_{1:m})$. Since $\cPab$ is compact, some subsequence $\{\bar{\mu}_{m_k}\}_k$ converges weakly to a limit $\mu_* \in \cPab$ as $k \to \infty$. Now for any $j \geq 1$, we have $m_k \geq j$ for sufficiently large $k$, so that $\bar{\mu}_{m_k} \in \cF_{m_k}(c_{1:m_k})$ implies $\eps_j(\bar{\mu}_{m_k}) = \eps_j(\mu)$. Because $t \mapsto (b-t)^j \in C([a,b])$, the Portmanteau lemma yields $\eps_j(\mu_*) = \lim_{k \to \infty} \eps_j(\bar{\mu}_{m_k}) = \eps_j(\mu)$. Since $j$ was arbitrary, \cref{thm:Thomeo} guarantees that $\mu_* = \mu$. Now, again by the Portmanteau lemma, we have
    \begin{equation}\label{eq:Fseqineq2}
        \limsup_{k \to \infty} \bar{F}_{m_k}(x_0; c_{1:m_k})
        = \limsup_{k \to \infty} \bar{\mu}_{m_k}([a,x_0])
        \leq \mu([a,x_0]) = F(x_0).
    \end{equation}
    Taking liminfs as $k \to \infty$ in \cref{eq:Fseqineq1} along the subsequence $\{m_k\}_k$ gives 
    \[F(x_0) \leq \liminf_{k \to \infty} \bar{F}_{m_k}(x_0; c_{1:{m_k}})\] and combining this with \cref{eq:Fseqineq2}, we find that
    \begin{equation*}
        \limsup_{k \to \infty} \bar{F}_{m_k}(x_0; c_{1:m_k})
        \leq F(x_0)
        \leq \liminf_{k \to \infty} \bar{F}_{m_k}(x_0; c_{1:{m_k}}),
    \end{equation*}
    from which $\lim_{m \to \infty} \bar{F}_m(x_0; c_{1:m}) \searrow F(x_0)$ follows. $\underline{F}_m(x_0; c_{1:m})$ is treated analogously, which completes the proof.
\end{proof}

The extremizers guaranteed by \cref{cor:momentmatching,cor:cdfbounding} admit nonlinear parameterizations in terms of finitely many support points and weights. In the case of \cref{cor:cdfbounding}, for example,
\begin{equation}\label{eq:NLPmax}
\begin{aligned}
    \bar{F}_m(x_0; c_{1:m})
    &= \max_{\substack{\bx \in [a,b]^{m+1}\\ \bw \geq \bzero}} \bs(\bx; x_0)^\top \bw \\
    &\hspace{0.25in}\text{subject to } 
       \Vb(\bx)^\top \bw = \br \quad \text{and} \quad \bw^\top \bone = 1,\\
\end{aligned}
\end{equation}
where 
\begin{align*}
\br &:= \begin{pmatrix}
    1! c_1, \ldots, m! c_m \\
\end{pmatrix}^\top \in [0,\infty)^m, \\
\bs(\bx; x_0) &:= \begin{pmatrix}
    \Ind{x_1 \leq x_0}, \ldots, \Ind{x_{m+1} \leq x_0}
\end{pmatrix}^\top \in \{0,1\}^{m+1},
\end{align*}
and
\begin{equation*}
    \Vb(\bx) := \begin{bmatrix}
        (b-x_1)^1 & \cdots & (b-x_1)^m \\
        \vdots & \ddots & \vdots \\
        (b-x_{m+1})^1 & \cdots & (b-x_{m+1})^m \\
    \end{bmatrix} \in \R^{(m+1) \times m}.
\end{equation*}
The sharp lower bound for the left limit admits a similar representation:
\begin{equation}\label{eq:NLPmin}
\begin{aligned}
    \underline{F}_m(x_0; c_{1:m}) 
    &= \min_{\substack{\bx \in [a,b]^{m+1}\\ \bw \in  [0,1]^{m+1}}} \bs'(\bx; x_0)^\top \bw \\
    &\hspace{0.25in}\text{subject to } 
   \Vb(\bx)^\top \bw = \br \quad \text{and} \quad \bw^\top \bone = 1,\\
\end{aligned}
\end{equation}
where
\begin{equation*}
    \bs'(\bx; x_0) := \begin{pmatrix}
    \Ind{x_1 < x_0}, \ldots, \Ind{x_{m+1} < x_0}
\end{pmatrix}^\top \in \{0,1\}^{m+1},
\end{equation*}

The upper and lower bounds in \cref{cor:cdfbounding} are more conveniently computed through dual formulations in terms of polynomials, as explained in the context of the generalized duality principle in Chapter~IX of \cite{kreinMarkovMomentProblem1977}. This approach reformulates the search over probability measures as a semi-infinite linear program over the coefficients of a majorizing (or minorizing) polynomial. To compute $\bar{F}_m(x_0; c_{1:m})$, we optimize over polynomials $p^+_m$ of degree at most $m$ with respect to the basis $\{(b-x)^0, \ldots, (b-x)^m\}$ --- say $p^+_m(x) := \sum_{j=0}^m \alpha_j (b-x)^j$ --- subject to the pointwise constraint $p^+_m(x) \geq \Ind{x \leq x_0}$ for all $x \in [a,b]$. Among all such polynomials, we choose the one that minimizes the objective $\sum_{j=0}^m \alpha_j j! c_j$; this objective is linear in the coefficients $\alpha_0, \ldots, \alpha_m$ while the constraints range over $x \in [a,b]$, and so the problem is amenable to a basic cutting-plane method. Analogously, $\underline{F}_m(x_0; c_{1:m})$ is computed by maximizing $\sum_{j=0}^m \beta_j j! c_j$ subject to $\sum_{j=0}^m \beta_j (b-x)^j \leq \Ind{x < x_0}$ for all $x \in [a,b]$. Numerically, the inequality constraints can be enforced over a sufficiently fine grid in $[a,b]$, yielding a standard linear program that can be solved using the simplex method, and the grid can then be iteratively refined by checking and adding violated constraints. A broad array of more advanced methods are also available for such optimization problems \citep[see, e.g.,][]{gobernaLinearSemiInfiniteOptimization1998}.

\begin{ex}
    To demonstrate, suppose we wish to bound $F_m(1/2)$, where $F_m$ is the cdf of a random variable supported on $[0,1]$ whose first $m$ moments coincide with those of the $\stdunif$ distribution. Finding upper and lower bounds on $F_m(1/2)$ amounts to solving \cref{eq:NLPmax,eq:NLPmin} with $c_j = 1/(j+1)!$ for $j=1,\ldots,m$ (see \cref{ex:beta} of \cref{sec:examples}) and $[a,b] = [0,1]$. We derive an analytical solution when $m = 3$. Let $X$ be our target random variable and let $Y = 1-X$. Matching the first $3$ moments of the $\stdunif$ distribution means that $j!c_j = \E[Y^j] = 1/(j+1)$ for $j = 1,2,3$. Write $g^+(y):= \Ind{y \geq 1/2}$ and $g^-(y) := \Ind{y > 1/2}$, corresponding respectively to $\Ind{x \leq 1/2}$ and $\Ind{x < 1/2}$ under the change of variables $y = 1-x$. Now, suppose $p^+_3$ is a polynomial of degree at most $3$ that majorizes $g^+$ on $[0,1]$. Then 
    \begin{equation*}
        F_3\left(\frac{1}{2}\right) = \E[g^+(Y)] \leq \E[p^+_3(Y)] = \int_0^1 p^+_3(y) \dif y
    \end{equation*}
    and analogously if $p^-_3$ minorizes $g^-$ , then
    \begin{equation*}
        F_3\left(\frac{1}{2}-\right) = \Prob\left(X < \frac{1}{2}\right) = \Prob\left(Y > \frac{1}{2}\right) = \E[g^-(Y)] \geq \E[p^-_3(Y)] = \int_0^1 p^-_3(y) \dif y.
    \end{equation*}
    For example, $p^+_3(y) := 4y - 5y^2 + 2y^3 \geq g^+(y)$ on $[0,1]$, and so $F_3(1/2) \leq \int_0^1 p^+_3(y) \dif y = 5/6$. Similarly, $p^-_3(y) := -4y + 11y^2 - 6y^3 \leq g^-(y)$ on $[0,1]$ and $\int_0^1 p^-_3(y) \dif y = 1/6$. We thus conclude that 
    \begin{equation}\label{eq:U01bounds}
        \frac{1}{6} \leq F_3\left(\frac{1}{2}-\right) \leq F_3\left(\frac{1}{2}\right) \leq \frac{5}{6}.
    \end{equation}
    The left and right bounds in \cref{eq:U01bounds} are sharp; indeed, they are attained by the cdf of the discrete random variable $X^*$ that satisfies $\Prob(X^* = 0) = \Prob(X^* = 1) = 1/6$ and $\Prob(X^* = 1/2) = 2/3$, which implies that $p_3^+$ and $p_3^-$ are, in fact, optimal. Closed-form analytic solutions are generally unavailable for $m \geq 4$, so we solve for the bounds numerically. \cref{fig:U01envelope} shows the upper and lower bounds for $m = 1, \ldots, 100$. As guaranteed by \cref{prop:CDFboundsconverge}, the bounds converge to $F_U(1/2) = 1/2$ as $m \to \infty$, where $U \sim \stdunif$.

\begin{figure}
        \centering
        \begin{tikzpicture}
            \begin{axis}[
                width=0.85\linewidth,
                height=0.45\linewidth,
                xmin=1, xmax=100,
                ymin=0, ymax=1,
                xlabel={$m$},
                legend pos=north east,
                grid=both,
                major grid style={gray!20},
                minor grid style={gray!10},
            ]

                \addplot+[thick, red, mark=none] table {plot_data/U01_envelope_upper.dat};
                \addlegendentry{upper bound on $F_m(1/2)$}

                \addplot+[thick, blue, mark=none] table {plot_data/U01_envelope_lower.dat};
                \addlegendentry{lower bound on $F_m(1/2)$}

                \addplot+[black, dashed, mark=none] coordinates {(1,0.5) (100,0.5)};
            \end{axis}
        \end{tikzpicture}
        \caption{Sharp upper and lower bounds for $F_m(1/2)$, where $F_m$ is supported on $[0,1]$ and matches the first $m$ moments of the $\stdunif$ distribution, for $m = 1, \ldots, 100$.}
        \label{fig:U01envelope}
    \end{figure}
\end{ex}

\section{Discussion}\label{sec:discussion}

In this paper, we develop an initial theory of the primitive sequence associated with a probability measure supported on a compact interval. In particular, the connection between primitive sequences and Hausdorff moment sequences allows classical moment theoretical ideas to be transported directly into the coordinates defined by the primitive sequence. 

Among the avenues for future work, it is of interest to define primitive sequences for random variables with unbounded support. One natural route for this is to work with primitives ``anchored'' at a given $c \in \R$, of the form $\eps^{(c)}_n := \E[(c-X)_+^n]/n!$, which directly connects the construction to stop-loss transforms and yields a family of distribution-characterizing sequences indexed by $c$. A second route works directly with the mgf $z \mapsto \E[e^{z(b-X)}]$ and interprets $\epsseq$ as the Taylor coefficients of this mgf centered at $z = 0$, provided that certain tail conditions on $X$ are imposed to ensure convergence around $0$. In this setting, the associated questions of admissibility are governed no longer by the Hausdorff moment problem but instead by the Hamburger or Stieltjes moment problems as applicable, and it would be interesting to understand which aspects of the theory for compact intervals survive in that setting, especially for truncated sequences.

It would also be of interest to explore multivariate analogues of the primitive sequence. On $\prod_{h=1}^d [a_h,b_h] \subset \R^d$, a direct analogue of our construction performs iterated integration of the joint cdf coordinatewise and evaluates it at the corner $(b_1, \ldots, b_d)$, producing a tensor $\{\varepsilon_{\bn}\}_{\bn}$ indexed by multi-indices $\bn = (n_1, \ldots, n_d) \in \N^d$, with the representation
\[
    \eps_{\bn} = \frac{1}{\prod_{h=1}^d n_h!}\, \E\left[\prod_{h=1}^d (b_h-X_h)^{n_h}\right].
\]
However, since the geometry of admissible tensors is likely to be far more complex than in one dimension, the choice of truncation family (e.g., total degree versus coordinate-wise truncation) and the structure of extreme points under finitely many constraints would need to be addressed carefully; the multidimensional truncated moment problem \citep[][Part~IV]{schmudgenMomentProblem2017} may provide useful insights into this question.

Finally, a continuous analogue of the primitive sequence can be developed by replacing the $n$-fold iterated integral used to define $\eps_n$ by the Riemann--Liouville fractional integral of order $\alpha>0$, defining
\[
    \eps(\alpha) := \frac{1}{\Gamma(\alpha)} \int_a^b (b-x)^{\alpha - 1} F_X(x) \dif x, \qquad \alpha > 0.
\]
The use of this definition will allow us to generalize the results of \cref{sec:reptheorem}, where $\alpha$ will replace $n$ and $\Gamma(\alpha + 1)$ will replace $n!$. For example, the continuous analogue of \cref{thm:representation} is
\[
    \eps(\alpha) = \frac{1}{\Gamma(\alpha + 1)} \E[(b-X)^\alpha], \qquad \alpha > 0,
\]
and that of \cref{cor:momentstoeps} is
\begin{equation*}
    \eps(\alpha) = \sum_{j=0}^\infty (-1)^j \frac{b^{\alpha-j}}{j! \Gamma(\alpha - j+1)} \E[X^j]
    \quad \text{and} \quad
    \E[X^\alpha] = \sum_{j=0}^\infty \frac{\Gamma(\alpha+1)}{\Gamma(\alpha-j+1)} b^{\alpha-j}\eps(j), \qquad \alpha > 0.
\end{equation*}
A thorough study of the consequences and potential of this generalization is left for future work.

\section{Acknowledgments}
Primitive sequences --- then called ``canonical primitives'' --- were defined for the special case $[a,b] = [0,1]$ in Section~2.3 of \cite{zimmermanCopulasNewTheory2025}, where \cref{cor:momentstoeps,cor:seqboundsequality} were given with different proofs. The second identity in \cref{eq:momentstoeps} of \cref{cor:momentstoeps} also appeared as Lemma~2 of \cite{lalancetteTractableFamilySmooth2025} for the same special case.

\bibliographystyle{apalike}
\bibliography{References_2026}

\end{document}